\renewenvironment{abstract}
{\begin{center}
		\textbf{Abstract}
	\end{center}
	\list{}{ 
		\setlength{\leftmargin}{0.05\textwidth}
		\setlength{\rightmargin}{\leftmargin}
	}
	\item\relax} 
{\endlist}
\newenvironment{keywords}
{\begin{trivlist}\item[]{\bfseries Keywords.}}
	{\end{trivlist}}
\newenvironment{subclass}
{\begin{trivlist}\item[]{\bfseries Mathematical Subject Classification:}}
	{\end{trivlist}}
\newenvironment{acknowledgements}
{\begin{trivlist}\item[]{\bfseries Acknowledgements:}}
	{\end{trivlist}}
\theoremstyle{plain} 
\newtheorem{thm}{Theorem}[section] 
\newtheorem{lem}[thm]{Lemma}
\newtheorem{pro}[thm]{Proposition}
\theoremstyle{definition}
\newtheorem{defi}[thm]{Definition}
\newtheorem{rem}[thm]{Remark}
\newcommand{\N}{\mathbb{N}}
\newcommand{\R}{\mathbb{R}}
\newcommand{\norm}[2][]{\left\|#2\right\|_{#1}}
\newcommand{\skp}[2]{\left\langle #1,#2 \right\rangle}
\newcommand{\dualbra}[2]{\left\langle #1,#2 \right\rangle}
\newcommand{\Lpnorm}[2][]{\ifthenelse{\equal{#1}{}}{\norm{#2}_{L^p}}{\norm{#2}_{L^p(#1)}}}
\newcommand{\Hknorm}[2][]{\ifthenelse{\equal{#1}{}}{\norm{#2}_{H^k}}{\norm{#2}_{H^k(#1)}}}
\newcommand{\setdef}[2]{\left\lbrace #1 \ : \ #2 \right\rbrace}
\newcommand{\set}[1]{\left\lbrace #1 \right\rbrace}
\newcommand{\trace}{\text{\normalfont tr\,}}
\renewcommand{\div}{\text{\normalfont div}}
\newcommand{\QV}[2][]{\ifthenelse{\equal{#1}{}}{\langle #1 \rangle}{\langle #1,#2 \rangle}}
\newcommand{\ind}{\mathbb{I}}
\newcommand{\F}{\mathscr{F}}
\newcommand{\PrM}{\mathscr{P}}
\newcommand{\Lin}{\mathscr{L}}
\newcounter{author}
\renewcommand*\author[1]{%
	\stepcounter{author}%
	\ifnum\c@author=1
	\gdef\@author{#1}%
	\else
	\xdef\@author{\unexpanded\expandafter{\@author\and#1}}%
	\fi
	\csgdef{author@\the\c@author}{#1}}
\newcommand*\email[1]{%
	\csgdef{email@\the\c@author}{#1}}
\newcommand*\address[1]{%
	\csgdef{address@\the\c@author}{#1}}
	\xdef\author@count{\the\c@author}%
\newcommand*\print@authors{%
	\ifnum\c@author>\author@count
	\else
	\print@author{\the\c@author}%
	\advance\c@author by 1
	\expandafter\print@authors
	\fi}
\newcommand*\print@author[1]{%
	\par\medskip
	\begin{tabular}{@{}l@{}}%
		\textsc{\csuse{author@#1}}\\
		\csuse{address@#1}\\
		\textit{E-mail address}: \csuse{email@#1}
\end{tabular}}
\title{\textrm{\textbf{\Large Self-similar profiles for homoenergetic solutions of the Boltzmann equation for non-cutoff Maxwell molecules}}}
\author{\large Bernhard Kepka}
\address{University of Bonn, Institute for Applied Mathematics \\ Endenicher Allee 60 \\ D-53115 Bonn \\ GERMANY}
\email{kepka@iam.uni-bonn.de}
\date{\normalsize \today}
\begin{document}

\maketitle

\begin{abstract}
	We consider a modified Boltzmann equation which contains, together with the collision operator, an additional drift term that is characterized by a matrix $ A $. Furthermore, we consider a Maxwell gas, where the collision kernel has an angular singularity. Such an equation is used in the study of homoenergetic solutions to the Boltzmann equation. Our goal is to prove that, under smallness assumptions on the drift term, the longtime asymptotics is given by self-similar solutions. We work in the framework of measure-valued solutions with finite moments of order $ p>2 $ and show existence, uniqueness and stability of these self-similar solutions for sufficiently small $ A $. Furthermore, we prove that they have finite moments of arbitrary order if $ A $ is small enough. In addition, the singular collision operator allows to prove smoothness of these self-similar solutions. Finally, we study the asymptotics of particular homoenergetic solutions. This extends previous results from the cutoff case to non-cutoff Maxwell gases.
	\begin{keywords}
		Boltzmann equation, Homoenergetic solutions, Long-range interactions, Self-similar solutions, Maxwell molecules, Non-equilibrium
	\end{keywords}
	\begin{subclass}
		35Q20, 82C40, 35C06
	\end{subclass}
	
	\begin{acknowledgements}
		The author thanks Juan J.L. Vel{\'a}zquez for the suggestion of the problem and helpful discussions. The author has been supported by the Deutsche Forschungsgemeinschaft (DFG, German Research Foundation) through the collaborative research centre \textit{The mathematics of emerging effects} (CRC 1060, Project-ID 211504053) and the Bonn International Graduate School of Mathematics at the Hausdorff Center for Mathematics (EXC 2047/1, Project-ID 390685813).
	\end{acknowledgements}
\end{abstract}

\tableofcontents

\section{Introduction}
The inhomogeneous Boltzmann equation is given by 
\begin{align}\label{eq:inhomBoltzmannEq}
	\partial_t f + v\cdot \nabla_x f = Q(f,f),
\end{align}
where $ f=f(t,x,v):[0,\infty)\times\R^3\times\R^3\rightarrow[0,\infty) $ is the one-particle distribution of a dilute gas in whole space. In this paper we will restrict ourselves to the physically most relevant case of three dimensions, although our study can be extended to dimensions $ d\geq 3 $ without any additional difficulties.

On the right-hand side we have Boltzmann's collision kernel
\begin{align*}
	Q(f,f) = \int_{\R^3}\int_{S^2}B(|v-v_*|,n\cdot \sigma)(f'_*f'-f_*f)d\sigma dv_*,
\end{align*}
where $ n=(v-v_*)/|v-v_*| $ and $ f'_* = f(v'_*) $, $ f' = f(v') $, $ f_*=f(v_*) $, with the pre-collisional velocities $ (v,v_*) $ resp. post-collisional velocities $ (v',v'_*) $. One parameterization of the post-collisional velocities is given by  the $ \sigma $-representation, i.e. for $ \sigma \in S^2 $
\begin{align}\label{eq:SigmaRep}
	\begin{split}
		v' = \dfrac{v+v_*}{2}+\dfrac{|v-v_*|}{2}\sigma, \qquad
		v'_* = \dfrac{v+v_*}{2}-\dfrac{|v-v_*|}{2}\sigma.
	\end{split}
\end{align}
Recall that the collision operator satisfies
\begin{align*}
	\int_{\R^3} Q(f,f) \varphi(v)dv=0, \quad \varphi(v)=1,\,v_1,\, v_2,\, v_3, \, |v|^2,
\end{align*}
which corresponds to the conservation of mass, momentum and energy. For an introduction into the physical and mathematical theory of the Boltzmann equation \eqref{eq:inhomBoltzmannEq} see for instance \cite{Cercignani1988BoltzmannEqApplication,Villani2002Review}.

The collision kernel is given by $ B(|v-v_*|,n\cdot \sigma) $ and it can be obtained from an analysis of the binary collisions of the gas molecules. For instance, power-law potentials $ 1/r^{q-1} $ with $ q>2 $ lead to (see e.g. \cite[Sec. II.5]{Cercignani1988BoltzmannEqApplication})
\begin{align}\label{eq:CollisinKernel}
	B(|v-v_*|,n\cdot \sigma)=|v-v_*|^{\gamma}\, b(n\cdot \sigma), \quad \gamma=(q-5)/(q-1),
\end{align}
where $ b:[-1,1)\rightarrow [0,\infty) $ has a non-integrable singularity of the form
\begin{align}\label{eq:angularSing}
	\sin\theta \, b(\cos\theta) \sim \theta^{-1-2/(q-1)}, \quad \text{as } \theta\to 0,
\end{align}
where $ \cos\theta = n\cdot \sigma $, with $ \theta $ being the deviation angle. It is customary to classify the collision kernels according to their homogeneity $ \gamma $ with respect to relative velocities $ |v-v_*| $. There are three cases: \textit{hard potentials} $ (\gamma>0) $, \textit{Maxwell molecules} $ (\gamma=0) $ and \textit{soft potentials} ($ \gamma<0 $).
In this paper we will consider the case of Maxwell molecules, hence $ B $ does not depend on $ |v-v_*| $, cf. \eqref{eq:CollisinKernel}. This corresponds to $ q=5 $ for power-law interactions.

Collision kernels with an angular singularity of the form \eqref{eq:angularSing} are called non-cutoff kernels. When $ \gamma=0 $, one refers to non-cutoff or true Maxwell molecules. This singularity reflects the fact that for power-law interactions the average number of \textit{grazing collisions}, i.e. collisions with $ v\approx v_* $, diverges. In kinetic theory the Boltzmann equation \eqref{eq:inhomBoltzmannEq} has then often been studied assuming that the collision kernel $ B $ is integrable in the angular variable, since the mathematical analysis is usually simpler. This assumption on the kernel is usually termed \textit{Grad's cutoff assumption}. The analysis of solutions to \eqref{eq:inhomBoltzmannEq} under such a cutoff assumption often allows to obtain some insight, even if such kernels might not be related to any physical interaction potential. Here, we consider \textit{non-cutoff} Maxwell molecules ($ \gamma=0 $).

In this paper, we analyze a particular class of solutions to \eqref{eq:inhomBoltzmannEq} namely the so-called \textit{homoenergetic solutions}, which have been studied in particular in \cite{BobylevVelazq2020SelfsimilarAsymp,JamesVelazq2019SelfsimilarProfilesHomoenerg} in the case of cutoff Maxwell molecules. We will show that the results obtain in their papers can be proved for non-cutoff Maxwell molecules, in particular for the kernel \eqref{eq:CollisinKernel} for interaction potentials $ 1/r^4 $. In this case, we have
\begin{align*}
	\sin\theta \, b(\cos\theta) \sim \theta^{-3/2}, \quad \text{as } \theta\to 0.
\end{align*}

\subsection{Homoenergetic solutions and existing results}
Our study concerns solutions to \eqref{eq:inhomBoltzmannEq} of the form
\begin{align}\label{eq:ansatzHomoenergSol}
	f(t,x,v)=g(t,v-L(t)x), \quad w=v-L(t)x,
\end{align} 
for $ L(t)\in\R^{3\times 3} $ and a function $ g=g(t,w):[0,\infty)\times\R^3\rightarrow[0,\infty) $ to be determined. One can check that solutions to \eqref{eq:inhomBoltzmannEq} of the form \eqref{eq:ansatzHomoenergSol} for large classes of functions $ g $ exist if and only if $ g $ and $ L $ satisfy
\begin{align}\label{eq:homenergBE}
	\begin{split}
		\partial_t g - L(t)w\cdot\nabla_w g &= Q(g,g), \\
		\dfrac{d}{dt}L(t) + L(t)^2 &= 0.
	\end{split}
\end{align}
The second equation allows the reduction to the variable $ w $. In particular, the collision operator acts on $ g $ through the variable $ w $. The second equation can be solved explicitly $ L(t)=L(0)(I+tL(0))^{-1} $. Note that the inverse matrix might not be defined for all times, although this situation will not be considered here.

Solutions to \eqref{eq:homenergBE} are called \textit{homoenergetic solutions} and were introduced by Truesdell \cite{Truesdell1956PressureFluxII} and Galkin 
\cite{Galkin1958ClassOfSolution}. They studied their properties via moment equations in the case of Maxwell molecules. As is known since the work by Truesdell and Muncaster \cite{TruesdellMuncasterFundamentalsMaxwell}, in the case of Maxwell molecules it is possible to write a closed systems of ordinary differential equations for the moments up to any arbitrary order. This allows to derive properties about the solution to \eqref{eq:homenergBE}. In particular, this approach has been applied in \cite{Galkin1958ClassOfSolution,Galkin1964OneDimUnsteadySol,Galkin1966ExactSol,Truesdell1956PressureFluxII}. More recently, this method has also been used in \cite{GarzoSantos2003KineticTheory} (and references therein) in order to obtain information on homoenergetic solutions to the Boltzmann equation, as well as other kinetic models like BGK. The case of mixtures of gases has been studied there as well. The well-posedness of \eqref{eq:homenergBE} for a large class of initial data, was proved by Cercignani \cite{Cercignani1989ExistenceHomoenergetic}. Furthermore, the shear flow of a granular material for Maxwell molecules was studied in \cite{Cercignani2001ShearFlowGranularMaterial,CercignaniBoltzmannApproachShearFlow}.

A systematic analysis of the large time behavior of solutions to \eqref{eq:homenergBE} for kernels with arbitrary homogeneities has been undertaken in \cite{BobylevVelazq2020SelfsimilarAsymp,VelazqJames2019LongAsympHomoen,JamesVelazq2019SelfsimilarProfilesHomoenerg,JamesVelazq2019LongtimeAsymptoticsHypDom}. In \cite{JamesVelazq2019SelfsimilarProfilesHomoenerg} the existence of a class of self-similar solutions in the case of (cutoff) Maxwell molecules has been proved. The uniqueness and stability of these self-similar solutions has been proved in \cite{BobylevVelazq2020SelfsimilarAsymp} and the regularity has been obtained in \cite{Liu2020BoltzmannUniformShearFlow}. Homoenergetic solutions for the two-dimensional Boltzmann equation with hard sphere interactions, as well as for a class of Fokker-Planck equations have been studied in \cite{Matthies2019}.

It is worth mentioning that homoenergetic solutions to \eqref{eq:inhomBoltzmannEq} can be interpreted in a wider framework introduced in \cite{DayalJamesNonequilibrium,DayalJames2012DesignViscometers}. There the authors introduced a formulation of the molecular dynamics of many interacting particle systems in the presence of symmetries. In particular, if the particles of the system of molecules of a gas interact by means of binary collisions one obtains the functional form \eqref{eq:ansatzHomoenergSol} for the particle distribution.

In this paper, we will make an extensive use of the Fourier transform method, which was introduced by Bobylev \cite{Boby1976FourierBoltzmannEq,Boby1988TheoryNonlinearBoltzmann} in order to study the homogeneous Boltzmann equation for Maxwell molecules. This method has also been used in \cite{BobylevVelazq2020SelfsimilarAsymp} to study the self-similar asymptotics of homoenergetic solutions for (cutoff) Maxwell molecules.

The main contribution of this paper is to see how to adapt the techniques in \cite{BobylevVelazq2020SelfsimilarAsymp} and well established methods for the non-cutoff Boltzmann equation to extend the results in \cite{BobylevVelazq2020SelfsimilarAsymp,JamesVelazq2019SelfsimilarProfilesHomoenerg}  to the case of non-cutoff Maxwell molecules. The main difficulty is the singular behavior of the collision kernel \eqref{eq:angularSing}.

We remark that self-similar solutions for the homogeneous Boltzmann equation were considered for elastic resp. inelastic collisions with infinite resp. finite energy in \cite{BobylevCercignani2002ExactEternalSol,BobylevCercignani2002SelfsimilarSol,BobylevCercignani2002SelfSimilarAsymptotics} for Maxwell molecules and in \cite{BobylevCercignani2002SelfsimilarSolBENonMaxwell} for non-Maxwellian molecules. They proved stability of self-similar solutions in \cite{BobylevCercignaniToscani2003ProofAsymptPropSelfsimilarSol} for inelastic collisions (with cutoff) and in \cite{BobylevCercignani2002SelfsimilarSol} for elastic collisions (without cutoff), see also \cite{BobylevCercignani2002SelfSimilarAsymptotics}. Furthermore, self-similar solutions were analyzed in a general framework of Maxwell models in \cite{BobylevCercignani2009SelfsimilarAsympt}. The case of true Maxwell molecules was also discussed in \cite{CannoneKarch2010InfintieEnergySol,CannoneKarch2013SelfsimilarSolHomogBE}. In particular, they proved smoothness based on a regularity result of the homogeneous Boltzmann equation for measure-valued solutions \cite{MorimotoYang2015SmoothingHomogBoltzmannEqMeasure}. Roughly speaking, this is a consequence of the fact that the singular collision operator behaves like the fractional Laplacian $ -(-\Delta)^{s} $, see e.g. \cite{AlexandreDesvillettesVillaniWennberg2000EntropyDissip}.

\subsection{Overview and main results}
\paragraph{Notation.} We will denote by $ \PrM(\R^3) $ the set of Borel probability measures on $ \R^3 $ and by $ \PrM_p(\R^3)\subset \PrM(\R^3) $ the set of those which have finite moments of order $ p $, i.e. $ \mu\in\PrM_p $ if
\begin{align*}
	\norm[p]{\mu} = \int_{\R^3}|v|^p\mu(dv)<\infty.
\end{align*}
The action of $ \mu\in\PrM $ on a test function $ \psi $ via integration will be abbreviated as $ \dualbra{\psi}{\mu} $. The Fourier transform or characteristic function of a probability measure $ \mu\in\PrM $ is defined by
\begin{align*}
	\varphi(k)= \F[\mu](k) = \int_{\R^3}e^{-ik\cdot x}d\mu(x).
\end{align*}
We will denote by $ \F_p $ the set of all characteristic functions of probability measures $ \mu\in\PrM_{p} $.

Concerning test functions, we will write $ \psi\in C^k $ for $ k $-times continuously differentiable functions and $ \psi\in C^k_b $ if the following norm is finite
\begin{align*}
	\norm[C^k]{\psi} := \sum_{0\leq \ell \leq k} \norm[\infty]{D^\ell \psi}<\infty.
\end{align*}
Here $ D^\ell \psi $ denotes the $ \ell $-th order differential. We also write $ C=C^0 $ for the case $ k=0 $ of continuous functions.

Furthermore, we use the notation $ \left\langle k \right\rangle := \sqrt{1+|k|^2} $ and denote the space of functions $ h:\R^3\to \R $ such that $ \left\langle k \right\rangle^m h(k)\in L^2(\R^3) $ by $ L^2_m(\R^3) $. 

For matrices $ A\in\R^{3\times 3} $ we use the matrix norm $ \norm{A}=\sum_{ij}|A_{ij}| $. Finally, we abbreviate $ a\wedge b= \min(a,b) $ for $ a,b\in\R $ and $ \ind_B $ is the indicator function for some set $ B $.

\paragraph{Assumption on the kernel.} We will consider non-cutoff Maxwell molecules, i.e. the collision kernel $ B $ does not depend on $ |v-v_*| $ and has the form $ B = b(n\cdot \sigma)=b(\cos\theta) $. The function $ b:[-1,1)\rightarrow[0,\infty) $ is measurable, locally bounded and has the angular singularity
\begin{align}\label{eq:AssumptCorssSectionNonCutoff}
	\sin\theta b(\cos\theta)\theta^{1+2s}\to K_b>0, \quad \text{as } \theta\to 0
\end{align}
for some $ s\in(0,1) $ and $ K_b>0 $. This implies
\begin{align}\label{eq:AssumptCrossSection}
	\Lambda =\int_0^{\pi} \sin\theta \, b(\cos\theta) \, \theta^2 d\theta <\infty.
\end{align}
In particular, this covers inverse power-law interactions with $ q=5 $, cf. \eqref{eq:CollisinKernel} and \eqref{eq:angularSing}.

\paragraph{Main result.} In our study we consider the following modified Boltzmann equation (and small perturbations of it), which is a variant of equation \eqref{eq:homenergBE},
\begin{align}\label{eq:GenHomoengBE}
	\partial_t f = \div(Av \, f) + Q(f,f), \quad f(0,\cdot)=f_0(\cdot).
\end{align}
In contrast to the previous equations, $ A\in\R^{3\times 3} $ is always a time-independent matrix. However, the study of solutions to \eqref{eq:homenergBE} can be reduced to this situation using a change of variables and perturbation arguments, see Section \ref{sec:Application}. We work with weak solutions with finite energy (i.e. second moments) that satisfy the following definition.

\begin{defi}\label{def:WeakForm}
	A family of probability measures $ (f_t)_{t\geq0}\subset \PrM_{p} $ with $ p\geq 2 $ is a weak solution to \eqref{eq:GenHomoengBE} if for all $ \psi\in C^2_b $ and all $ 0\leq t<\infty $ it holds
	\begin{align}\label{eq:WeakForm}
		\begin{split}
			\dualbra{\psi}{f_t}=&\dualbra{\psi}{f_0} - \int_{0}^{t}\dualbra{Av\cdot \nabla\psi}{f_r}dr 
			\\
			&+ \dfrac{1}{2}\int_0^t \int_{\R^3\times\R^3}\int_{S^2}b(n\cdot \sigma) \left\lbrace \psi'_*+\psi'-\psi_*-\psi \right\rbrace d\sigma f_r(dv)f_r(dv_*)dr.
		\end{split}
	\end{align}
	Here, we also assume that the integrands in the time integrals are measurable with respect to the time variable.
\end{defi}
Here we used $ \psi'_*=\psi(v'_*) $, etc. This formulation is motivated by multiplying \eqref{eq:GenHomoengBE} with $ \psi $, integrating in velocity space and time and applying the usual pre-post collisional change of variables $ (v,v_*)\leftrightarrow(v',v'_*) $ as well as $ v\leftrightarrow v_* $. See also e.g. \cite{JamesVelazq2019SelfsimilarProfilesHomoenerg,MouhotLu12MeasureSolBoltzmannEqPart1} concerning the above definition. For brevity we will sometimes denote the term involving the collision operator $ \dualbra{\psi}{Q(f_r,f_r)} $. Note that this is well-defined due to the moment assumption $ f_t\in \PrM_{p} $, $ p\geq2 $, in conjunction with the estimate
\begin{align}\label{eq:IntrodCancellation}
	\left|\int_{S^2} b(n\cdot \sigma) \left\lbrace \psi'_*+\psi'-\psi_*-\psi \right\rbrace d\sigma\right| \leq 2\pi\Lambda \,\left( \max_{|\xi|\leq \sqrt{|v|^2+|v_*|^2}}{|D^2\psi(\xi)|}\right) \, |v-v_*|^2.
\end{align}
This follows from a Taylor expansion up to order two and the use of spherical coordinates in the $ \sigma $-integral $ (\theta,\varphi)\in(0,\pi)\times(0,2\pi) $ such that $ \cos\theta = n\cdot\sigma $, $ n=(v-v_*)/|v-v_*| $ (see e.g. \cite{Villani1998NewClassWeakSol} or \cite{MouhotLu12MeasureSolBoltzmannEqPart1}).
Let us mention that one can always consider, without loss of generality, the case of vanishing momentum/mean $ \int_{\R^3} v f_0(dv)=0 $. To get a solution $ F $ with initial mean $ U\in\R^3 $ from $ f_t $, one defines $ F(t,v)=f_t(v-e^{tA}U) $ interpreted as a push-forward. However, as we will see, solutions with initial condition different from a Dirac measure are smooth for positive times due to the regularizing effect of the angular singularity.

Let us also define the following Fourier-based metric on probability measures.
\begin{defi}\label{def:ProbMetric}
	For two probability measures $ \mu,\,\nu\in\PrM_{p} $ with finite moments of order $ p\geq2 $ we define a distance using the Fourier transforms $ \varphi=\F[\mu],\, \psi=\F[\nu] $ via
	\begin{align*}
		d_2(\mu,\nu):=\sup_k\dfrac{|\varphi(k)-\psi(k)|}{|k|^2}.
	\end{align*}
\end{defi}
Note that $ d_2(\mu,\nu)<\infty $ is finite if $ \mu,\nu $ have equal first moments. With this let us state our main result.

\begin{thm}\label{thm:MainTheorem}
	Consider the equation \eqref{eq:GenHomoengBE}. Let $ 2< p \leq 4 $. There is a constant $ \varepsilon_0=\varepsilon_0(p,b)>0 $ such that if $ \norm{A}\leq \varepsilon_0 $, the following holds.
	\begin{enumerate}[(i)]
		\item There is $ \bar{\beta}=\bar{\beta}(A) $ and $ f_{st}\in\PrM_{p} $ so that \eqref{eq:GenHomoengBE} has a self-similar solution
		\begin{align}\label{eq:SelfSimilarSol}
			f(v,t)=e^{-3\bar{\beta}t}f_{st}\left( \dfrac{v-e^{-tA }U}{e^{\bar{\beta}t}} \right), \quad U\in\R^3,
		\end{align}
		where $ f_{st} $ has moments
		\begin{align*}
			\int_{\R^3}vf_{st}(dv)=0, \quad \int_{\R^3}v_iv_jf_{st}(dv)=K\bar{N}_{ij}.
		\end{align*}
		Here, $ K\geq0 $ and $ \bar{N}=\bar{N}(A)\in\R^{3\times 3} $ is a uniquely given positive definite, symmetric matrix with $ \sum_{ij}(\bar{N}_{ij})^2=1 $. For $ K=0 $, we have $ f_{st}=\delta_0 $, a Dirac measure in zero. 
		
		Furthermore, when $ K>0 $ the self-similar solutions are smooth
		\begin{align*}
			f(t,\cdot)\in L^1(\R^3)\cap \bigcap_{k\in\N} H^k(\R^3).
		\end{align*}
		
		\item Let $ (f_t)_t\subset \PrM_{p} $ be a weak solution to \eqref{eq:GenHomoengBE} with initial condition $ f_0\in\PrM_{p} $ and
		\begin{align*}
			U=\int_{\R^3}vf_0(dv).
		\end{align*}
		Then there is $ \alpha=\alpha(f_0)\in\R $, $ C=C(f_0,p)>0 $, $ \theta=\theta(\varepsilon_0)>0 $ such that the rescaled function
		\begin{align*}
			\tilde{f}(t,v):=e^{3\bar{\beta}t}f\left( e^{\bar{\beta}t}v+e^{-A t}U,t\right)
		\end{align*}
		satisfies
		\begin{align*}
			d_2\left(\tilde{f}(t,\cdot) , f_{st}(t,\cdot)  \right) \leq Ce^{- \theta t},
		\end{align*}
		where $ f_{st} $ is given in $ (i) $ with second moments $ \alpha^2\bar{N} $, $ K=\alpha^2 $. In particular, the self-similar solution in (i) is unique for given $ K\geq0 $.
		
		\item In addition, for all $ M\in\N $, $ M\geq 3 $ there is $ \varepsilon_M\leq \varepsilon_0 $ such that the self-similar solution from (i) has finite moments of order $ M $ if $ \norm{A}\leq \varepsilon_M $.
	\end{enumerate}
\end{thm}

\begin{rem}
	Note that $ f_{st} $ in (i) solves (in weak sense)
	\begin{align}\label{eq:GenStationaryBE}
		\div_v((A+\beta) v \cdot f_{st}) + Q(f_{st},f_{st})=0, \quad \beta=\bar{\beta}(A).
	\end{align}
	Furthermore, $ \bar{N} $ is a stationary solution to the second order moment equations ($ \bar{b}\in\R $ depending only on the collision kernel, see Lemma \ref{lem:MomentEq})
	\begin{align*}
		2\bar{\beta} \bar{N} - A\bar{N}-(A \bar{N})^\top -2\bar{b}\left( \bar{N}-\dfrac{\trace(\bar{N})}{3}I\right) = 0.
	\end{align*}
	As we will see, $ \bar{\beta}=\bar{\beta}(A) $ is chosen such that $ 2\bar{\beta} $ is the simple eigenvalue with largest real part (which is real). The corresponding eigenvector is given by $ \bar{N}=\bar{N}(A) $.
	
	The uniqueness result in $ (ii) $ can now be formulated in a more precise way: within the class of probability measures $ \PrM_{p} $, $ p>2 $, there is a unique solution $ f_{st}\in\PrM_{p} $ to the stationary equation \eqref{eq:GenStationaryBE} with $ \beta=\bar{\beta}(A) $ having moments 
	\begin{align*}
		\int_{\R^3}vf_{st}(dv)=0, \quad \int_{\R^3}v_iv_jf_{st}(dv)=\bar{N}_{ij}.
	\end{align*}
	It follows that $ f_{st}(K^{-1/2}v)K^{-3/2} $ has second moments $ K\bar{N}_{ij} $, $ K>0 $, and hence is the respective self-similar profile in $ (i) $. For $ K=0 $ this is a Dirac in zero.
\end{rem}

\begin{rem}
	The above theorem is similar to the results in \cite{BobylevVelazq2020SelfsimilarAsymp,JamesVelazq2019SelfsimilarProfilesHomoenerg}, where \textit{cutoff} Maxwell molecules have been considered. A comparison with Theorem \ref{thm:MainTheorem}, which covers the \textit{non-cutoff} case, shows that all results hold true under the same assumptions. Here, the smoothness statement in (i) is a consequence of the regularizing effect of the non-cutoff collision kernel, in contrast to the cutoff case \cite{Liu2020BoltzmannUniformShearFlow}, where this has been obtained in a smooth, perturbative framework close to a Maxwellian.
\end{rem}

\begin{rem}
	Regarding part $ (iii) $ in Theorem \ref{thm:MainTheorem} it might be that for small but fixed $ A\neq 0 $ the self-similar solutions do not have finite moments of arbitrary order, but that they have power-law tails. For shear flow this is suggested by numerical experiments, see \cite{GarzoSantos2003KineticTheory}.
	
	Let us also mention that the smallness of $ \norm{A} $ is crucial for our perturbation arguments. The precise behavior of solutions to \eqref{eq:GenHomoengBE} for large values of $ A $ remains open.
\end{rem}
The paper is organized in the following way. In Section \ref{sec:WellPosReg} we discuss the well-posedness theory of equation \eqref{eq:GenHomoengBE} and in Section \ref{sec:ExUniStabSelfSimilar} the proof of Theorem \ref{thm:MainTheorem}. Finally, in Section \ref{sec:Application} we come back to homoenergetic solution in the case of simple and planar shear and discuss the self-similar asymptotics.

\section{Well-posedness of the modified Boltzmann equation}
\label{sec:WellPosReg}

The aim of this section is to prove the following result, which summarizes the well-posedness theory of equation \eqref{eq:GenHomoengBE}, needed in our study.

\begin{pro}\label{pro:IntExistUniqReg}
	Under our general assumptions, the following statements hold.
	\begin{enumerate}[(i)]
		\item For all $ f_0\in\PrM_{p} $, $ p\geq2 $, there is a weak measure-valued solution $ (f_t)_t\subset\PrM_{p} $ to \eqref{eq:GenHomoengBE}. In addition, every weak solution has the property $ t\mapsto\dualbra{\psi}{f_t}\in C^1([0,\infty);\R) $ for all test functions $ \psi\in C^2 $ with $ \norm[\infty]{D^2\psi}<\infty $.
		
		\item For two weak solutions $ (f_t)_t, \, (g_t)_t\subset\PrM_{p} $ to \eqref{eq:GenHomoengBE}, $ p\geq2 $, such that $ f_0,\, g_0 $ have equal first moments, it holds
		\begin{align}\label{eq:UniquenssMetric}
			d_2(f_t,g_t)\leq e^{2\norm{A}t}d_2(f_0,g_0).
		\end{align}
		In particular, solutions are unique.
		
		\item If the initial datum $ f_0\in\PrM_{p} $, $ p\geq2 $, is not a Dirac measure,  the solution is smooth, i.e.
		\begin{align*}
			f(t,\cdot) \in L^1(\R^3)\cap \bigcap_{k\in\N} H^k(\R^3)
		\end{align*}
		for all $ t>0 $.
	\end{enumerate}
\end{pro}
\begin{rem}
	The setting of measure-valued solutions was also used in \cite{JamesVelazq2019SelfsimilarProfilesHomoenerg} for homoenergetic solutions. Measure-valued solutions to the homogeneous Boltzmann equation ($ A=0 $ in \eqref{eq:GenHomoengBE}) were considered in e.g. \cite{MouhotLu12MeasureSolBoltzmannEqPart1,MorimotoWang2016MeasureValuedSol} for both hard and soft potentials with homogeneity $ \gamma\geq -2 $. In \cite{MorimotoWang2016MeasureValuedSol} solutions with infinite energies are studied as well, see also \cite{CannoneKarch2010InfintieEnergySol,Morimoto2012RemarkCanoneKarchSol} for the case of Maxwell molecules.
	
	The metric in Definition \ref{def:ProbMetric} is also termed Toscani metric and appeared first in \cite{GabettaToscaniWennberg1995MetricsProbDistTrendEquili} for the study of convergence to equilibrium of the homogeneous Boltzmann equation with true Maxwell molecules. Furthermore, it was used to prove uniqueness of respective solutions in \cite{VillaniToscani1999}, by showing that solutions are contractive w.r.t. $ d_2 $. Inequality \eqref{eq:UniquenssMetric} is the extension of this Lipschitzianity to homoenergetic solutions. Further variants with a time-dependent matrix $ A $ are possible. The Toscani metric was also used in \cite{CannoneKarch2010InfintieEnergySol,MorimotoWang2016MeasureValuedSol} to construct solutions to the homogeneous Boltzmann equation with infinite energies. 
	
	The study of the homogeneous Boltzmann equation via the Fourier transform (for Maxwell molecules) goes back to Bobylev \cite{Boby1976FourierBoltzmannEq,Boby1988TheoryNonlinearBoltzmann} and since then was used often in the literature. See e.g. \cite{PulvirentiToscani1996TheoryBotlzEqMaxwellMolecules} for a study of the well-posedness theory of the homogeneous Boltzmann equation for true Maxwell molecules.
	
	Let us also mention that in the regime of the homogeneous Boltzmann equation uniqueness is in general false for $ \gamma\neq0 $, even for cutoff kernels. In fact, one can construct solutions with increasing energy \cite{LuWennberg2002SolutionIncEnergy,Wennberg1999ExampleNonuniq}. Such solutions do not exist for Maxwell molecules \cite{VillaniToscani1999}.
\end{rem}

The proof of Proposition \ref{pro:IntExistUniqReg} is based on well-posedness results in the cutoff case. Moment estimates as well as compactness properties for probability measures are used to obtain existence. The inequality \eqref{eq:UniquenssMetric} is derived using the Fourier transform and similar arguments as in \cite{VillaniToscani1999}. Finally, adapting the proof for the homogeneous Boltzmann equation in \cite{MorimotoYang2015SmoothingHomogBoltzmannEqMeasure} implies the smoothness result.

For our approximation arguments let us introduce an arbitrary cutoff sequence $ b_n : [-1,1)\rightarrow [0,\infty) $, $ b_n\not\equiv0 $, with $ b_n\nearrow b $, $ \norm[\infty]{b_n}<\infty $, e.g. $ b_n:=b\wedge n $ and denote the corresponding collision operators by $ Q_n $. The sequence of cutoff problems then read
\begin{align}\label{eq:CutoffGenHomoengBE}
	\partial_t f^n = \div(Av \, f^n) + Q^n(f^n,f^n), \quad f^n(0,\cdot)=f_0(\cdot).
\end{align}
Furthermore, let $ \Lambda_n $ be the corresponding constant as defined in \eqref{eq:AssumptCrossSection} with $ b_n $ replacing $ b $. Hence, by our choice of the cutoff $ \Lambda_n\leq \Lambda $. 

We will first prove part (ii) and for this reason and our later analysis summarize a view results of the modified Boltzmann equation \eqref{eq:GenHomoengBE} in Fourier space.

\subsection{The modified Boltzmann equation in Fourier space}
\label{subsec:FourierBE}
We reformulate the problem \eqref{eq:GenHomoengBE} respectively \eqref{eq:CutoffGenHomoengBE} in Fourier space. Consider a weak solution $ (f_t)_t\subset \PrM_{p} $, $ p\geq2 $ and its Fourier transform $ \varphi_t(k) = \F[f_t](k) $.

Recall the definition of the Fourier-based metric from Definition \ref{def:ProbMetric}. Since we will mostly work with characteristic functions, we will also use the notation $ d_2(\varphi,\psi)=d_2(\mu,\nu) $ for $ \varphi,\psi\in\F_p $. 

For a fixed $ k\in\R^3 $, we use $ \psi(v)=e^{-ik\cdot v} $ as a test function in the weak formulation of \eqref{eq:GenHomoengBE} yielding
\begin{align}\label{eq:FTgenBE}
	\partial_t\varphi_t(k) + A^\top k\cdot \nabla\varphi_t(k) = \hat{Q}(\varphi_t,\varphi_t)(k).
\end{align}
Note that part (i) in Proposition \ref{pro:IntExistUniqReg} implies that $ t\mapsto\varphi_t(k) \in C^1 $. However, we will make sure not to use it until the proof is settled. The last term in \eqref{eq:FTgenBE} corresponds to the collision operator. Since we work with Maxwell molecules, it has the form (Bobylev's formula \cite{Boby1976FourierBoltzmannEq,Boby1988TheoryNonlinearBoltzmann})
\begin{align}\label{eq:FTCollisionOp}
	\hat{Q}(\varphi,\varphi)(k) = \int_{S^2}b(\hat{k}\cdot \sigma) \left\lbrace \varphi(k_+)\varphi(k_-)-\varphi(k)\varphi(0) \right\rbrace d\sigma,
\end{align}
where $ k_\pm = (k\pm |k|\sigma)/2 $, $ \hat{k}=k/|k| $. Let us write $ \hat{Q}_n $ for the Fourier representation of the collision operator corresponding to a cutoff sequence $ 0\leq b_n\nearrow b $. We will often consider a decomposition of it in a gain and loss term. Thus, let us define
\begin{align}\label{eq:FTGainLossTerm}
	\begin{split}
		\hat{Q}_n^+(\varphi,\varphi)(k) &= \int_{S^2}b_n(\hat{k}\cdot \sigma) \varphi(k_+)\varphi(k_-)d\sigma,\\
		\hat{Q}_n^-(\varphi,\varphi)(k) &= \int_{S^2}b_n(\hat{k}\cdot \sigma)\varphi(k)\varphi(0) d\sigma = S_n \varphi(k).
	\end{split}
\end{align}
In the last equation, we used $ \varphi(0)=1 $ for characteristic functions and the constant
\begin{align}\label{eq:IntegralConstant}
	S_n:=\int_{S^2}b_n(e\cdot \sigma)d\sigma, \quad e\in S^2.
\end{align}
Observe that the integral does not depend on $ e\in S^2 $ by rotational invariance. This integral measures the average number of collisions and, since $ b $ is singular, we have $ S_n\nearrow+\infty $ as $ n\to \infty $. In particular, when analyzing the cutoff equation it is convenient to introduce the semigroup notation. The solution $ \varphi $ to \eqref{eq:FTgenBE} can then be written in mild form
\begin{align*}
	\varphi_t = e^{-A^\top k\cdot\nabla t}\varphi_0+\int_0^t e^{-A^\top k\cdot\nabla (t-r)}\hat{Q}_n(\varphi_r,\varphi_r)dr,
\end{align*}
where $ e^{-A^\top k\cdot\nabla t}\varphi(k)=\varphi(e^{-A^\top t}k) $.

Finally, let us recall the following property of characteristic functions.
\begin{lem}\label{lem:CharacteristicFunction}
	Consider $ \mu\in\PrM_{p} $, $ p>0 $, then its characteristic function satisfies $ \varphi\in C_b^{\lfloor p \rfloor,p-\lfloor p \rfloor} $ if $ p\notin\N $ and $ \varphi\in C_b^{p} $ if $ p\in\N $. Furthermore, 
	\begin{align*}
		\sup_k |\varphi(k)|\leq \varphi(0)=1, \quad \overline{\varphi(k)}=\varphi(-k).
	\end{align*}
\end{lem}
Let us note that $ \hat{Q}(\varphi,\varphi) $ is well-defined, even in the case of singular kernels. In fact, using $ \psi_t(k)= e^{-ik\cdot v} $ as a test function in the weak formulation and inferring the estimate \eqref{eq:IntrodCancellation}, we have
\begin{align}\label{eq:FourierCancellation}
	|\hat{Q}(\varphi,\varphi)(k)| \leq C\Lambda\, \norm[C^2]{\varphi} \, |k|^2.
\end{align}
Here, the second moments of the probability measure are estimated in terms of the second derivatives of its Fourier transform $ \varphi $, cf. Lemma \ref{lem:CharacteristicFunction}. Alternatively, one can use a Taylor expansion of $ \varphi $ up to second order in \eqref{eq:FTCollisionOp}. With a sequence of cutoff  kernels $ b_n\nearrow b $, we get
\begin{align}\label{eq:FourierCancellation2}
	|\hat{Q}(\varphi,\varphi)-\hat{Q}_n(\varphi,\varphi)|(k)\leq C(\Lambda-\Lambda_n) \, \norm[C^2]{\varphi} \, |k|^2.
\end{align}

\subsubsection{Linearization and Lipschitz property of the gain term}
For the Fourier transform of the cutoff operator $ \hat{Q}_n $ we introduce the linearization of $ \hat{Q}^+_n $ defined by
\begin{align}\label{eq:Linearization}
	\Lin_n(\varphi)(k)=\int_{S^2}b_n(\hat{k}\cdot \sigma)(\varphi(k_+)+\varphi(k_-))d\sigma,
\end{align} 
where $ \varphi\in C_b $, say. The following lemma is an adaptation of a similar result in \cite[Sect. 5]{BobylevVelazq2020SelfsimilarAsymp} and includes also singular kernels.

\begin{lem}\label{lem:EigenfuncEigenvalLinOp}
	Let us define
	\begin{align}\label{eq:EigenvalLinOp}
		\begin{split}
			w_p(s):=&1- \left( \dfrac{1+s}{2}\right)^{p/2}-\left( \dfrac{1-s}{2}\right)^{p/2},
			\\
			\lambda_n(p):=&\int_{S^2}b_n(e\cdot\sigma)\, w_p(e\cdot\sigma)d\sigma, \quad \lambda(p):=\int_{S^2}b(e\cdot\sigma)\, w_p(e\cdot\sigma)d\sigma.
		\end{split}
	\end{align}
	Then, $ \lambda(p) $ is well-defined for $ p\geq2 $ and $ \lambda_n(p)\to\lambda(p) $. Furthermore, $ \lambda(p) $ is strictly increasing w.r.t. $ p\geq2 $. In particular, we have $ \lambda(p)>\lambda(2)=0 $ for $ p>2 $.
\end{lem}
\begin{rem}\label{rem:EigenvalLinOp}
	We remark that $ |k|^p $, $ p>0 $, can be interpreted as an eigenfunction of the operator $ (\Lin_n-S_nI) $ w.r.t. the eigenvalue $ -\lambda_n(p) $, since we have
	\begin{align*}
		(\Lin_n-S_nI)|k|^p = -\lambda_n(p)|k|^p.
	\end{align*}
\end{rem}
\begin{proof}[Proof of Lemma \ref{lem:EigenfuncEigenvalLinOp}] First of all, let us use spherical coordinates $ (\theta,\varphi)\in(0,\pi)\times(0,2\pi) $ such that $ e\cdot \sigma = \cos\theta $ to obtain
	\begin{align*}
		\lambda(p) = 2\pi\int_0^\pi b(\cos \theta)\sin\theta\, \left(1-\cos^{p}\left( \theta/2\right) -\sin^{p}\left( \theta/2\right)\right)\, d\theta \leq C_p \Lambda.
	\end{align*}
	for $ p\geq2 $. The same formula holds for $ \lambda_n(p) $ when replacing $ b $ by $ b_n $. Thus, we see that $ \lambda_n(p)\to \lambda(p)  $. Note that $ 1-\cos^{p}\left( \theta/2\right) -\sin^{p}\left( \theta/2\right) $ is strictly increasing w.r.t. $ p $. Since $ b\geq0 $ is strictly positive due to the singularity at $ \theta =0 $, we conclude that $ \lambda(p) $ is strictly increasing. Finally, we also have $ \lambda(2)=0 $.
\end{proof}

The following estimate will play a crucial role in our analysis. This result is an adaptation of \cite[Lemma 3.1]{BobylevVelazq2020SelfsimilarAsymp}, where we made the dependence on the constant $ S_n $ explicit. See also the proof in \cite[Theorem 5]{VillaniToscani1999}.
\begin{lem}\label{lem:PosCutoffEstimateP}
	Consider two characteristic functions $ \varphi,\psi\in\F_p $, $ p\geq 2 $, and a cutoff sequence $ b_n\nearrow b $. Then, we have
	\begin{align}\label{eq:LLipschitz}
		|\hat{Q}_n^+(\varphi,\varphi)-\hat{Q}_n^+(\psi,\psi)|(k)\leq \Lin_n(|\varphi-\psi| )(k).
	\end{align}
	Moreover, it holds
	\begin{align}\label{eq:LinOpEstimate}
		\Lin_n(|\varphi-\psi| )(k)\leq  S_nd_2(\varphi,\psi)|k|^2.
	\end{align}
\end{lem}
\begin{proof}
	The first inequality follows from
	\begin{align*}
		|\varphi(k_+)\varphi(k_-)-\psi(k_+)\psi(k_-)| \leq |\varphi(k_+)-\psi(k_+)|+|\varphi(k_-)-\psi(k_-)|.
	\end{align*}
	The second one is a consequence of
	\begin{align*}
		\Lin_n(|\varphi-\psi|) \leq d_2(\varphi,\psi)\Lin_n(|k|^2) = S_nd_2(\varphi,\psi)|k|^2,
	\end{align*}
	recalling Remark \ref{rem:EigenvalLinOp} for the last identity.
\end{proof}
\begin{rem}\label{rem:LinearOp}
	The inequality \eqref{eq:LLipschitz} was termed $ \Lin $-Lipschitzianity in \cite{BobylevCercignani2009SelfsimilarAsympt}. It allows to estimate the difference $ |\psi_t-\varphi_t| $ of two solutions $ \varphi, \psi $ to \eqref{eq:FTgenBE} by the solution to the linear equation
	\begin{align}\label{eq:FTLinearEquation}
		\partial_t u + A^\top k\cdot \nabla u = (\Lin_n-S_n)u.
	\end{align} 
	with initial condition $ u_0=|\psi_0-\varphi_0| $. See Lemma \ref{lem:ComparisionCutoff} for a precise statement.  Recall from \eqref{eq:FTGainLossTerm} that $ \hat{Q}^-_n(\varphi_t,\varphi_t)= S_n\varphi_t $ due to $ \varphi_t(0)=1 $. 
	
	The difficulty arising in the non-cutoff case is that the linear operator on the right-hand side in \eqref{eq:FTLinearEquation} is a priori not well defined as $ n\to\infty $. However, the term $ (\Lin_n-S_n)u $ still makes sense for $ n\to \infty $ when $ u $ satisfies $ u(0)=0 $ and $ u\in C^2_b $. We will make use of this when choosing $ u(k) = |k|^p $ for $ p\geq2 $. As a result we obtain estimates of the form $ |\psi_t-\varphi_t|(k)\leq C|k|^p  $, which can then be used to bound $ d_2(\varphi_t,\psi_t) $.
\end{rem}

\subsection{Uniqueness of weak solutions}
\label{subsec:Uniquenss}
We turn to the proof of part (ii) of Proposition \ref{pro:IntExistUniqReg}. The argument is similar to the proof of uniqueness of weak solutions to the homogeneous Boltzmann equation in \cite{VillaniToscani1999}.
\begin{proof}[Proof of Proposition \ref{pro:IntExistUniqReg}. (ii).] 
	We split the proof into two parts.
	
	\textit{Step 1.} Let $ (f_t)_t\subset \PrM_p $, $ p\geq2 $, be a weak solution. We start with a priori bounds of moments of order $ p\geq2 $. To this end, fix $ T>0 $. Using the test function $ \psi(v)=|v|^p $, $ p\geq2 $, and \eqref{eq:IntrodCancellation} yields
	\begin{align*}
		\norm[p]{f_t}\leq \norm[p]{f_0}+\int_{0}^tp\norm{A}\norm[p]{f_r}dr + C_p\Lambda \int_{0}^t \norm[p]{f_r}dr.
	\end{align*}
	Gronwall's inequality yields $ \norm[p]{f_t}\leq  Ce^{C(p,A)T}\norm[p]{f_0} $. To make this rigorous one has to use an approximation of $ \psi $. Since $ (f_t)_t\subset \PrM_p $, one can then pass to the limit.
	
	Let now $ \psi(v)=e^{-ik\cdot v} $ for some fixed $ k\in\R^3 $. The above estimate and \eqref{eq:IntrodCancellation} implies 
	\begin{align*}
		\sup_{0\leq t\leq T}\dualbra{\psi}{Q(f_t,f_t)}<\infty
	\end{align*}
	for all $ k\in\R^3 $. We conclude from the weak formulation \eqref{eq:WeakForm} that $ t\mapsto \varphi_t(k)=\F[f_t](k) $ is Lipschitz for fixed $ k\in \R^3 $ and by Lemma \ref{lem:CharacteristicFunction} twice continuously differentiable w.r.t. $ k\in\R^3 $. In particular, \eqref{eq:FTgenBE} holds for all $ k\in\R^3 $ and a.e. $ t\geq0 $.
	
	\textit{Step 2.} Let $ (f_t)_t $, $ (g_t)_t $ be two weak solutions and $ \varphi_t(k)=\F[f_t](k) $, $ \psi_t(k)=\F[g_t](k) $ be the corresponding Fourier transforms. We can assume w.l.o.g. that $ d_2(f_0,g_0)<\infty $, otherwise there is nothing to show in \eqref{eq:UniquenssMetric}. Hence, the first moments $ U\in\R^3 $ are equal initially and they remain the same for all times. Let us assume w.l.o.g. that they are zero (by removing them with a shift $ v\mapsto v+e^{tA}U $ as mentioned in the introduction). Using a Taylor approximation together with the regularity properties of $ \varphi_t,\psi_t \in C^2_b $ one can show that $ d_2(\varphi_t,\psi_t) $ is finite and bounded by the $ C^2 $-norms of $ \varphi_t,\, \psi_t $. Since the moments of order $ p\geq2 $ are all bounded on say $ [0,T] $ for fixed $ T>0 $, by Lemma \ref{lem:CharacteristicFunction} these norms are also uniformly bounded on $ [0,T] $. Furthermore, we obtain from \eqref{eq:FourierCancellation2}
	\begin{align}\label{eq:ProofUniquenessEstimate}
		R_n(t,k):=\dfrac{1}{|k|^2}|(\hat{Q}-\hat{Q}_n)(\varphi_t,\varphi_t) - (\hat{Q}-\hat{Q}_n)(\psi_t,\psi_t)|(k)\leq C(T)r_n,
	\end{align}
	where we estimated the two terms containing $ (\hat{Q}-\hat{Q}_n) $ separately. Here $ r_n $ is independent of $ k $ and $ r_n\to 0 $ as $ n\to\infty $.
	
	Turning to the equation, let us for simplicity write $ A $ instead of $ A^\top $ noting that $ \norm{A^\top}=\norm{A} $ for our norm. Our goal is to prove a differential inequality and apply Gronwall's lemma. For this evaluate the equations \eqref{eq:FTgenBE} satisfied by $ (\varphi_t)_t, (\psi_t)_t $ at the point $ e^{tA}k $ and take their difference leading to
	\begin{align*}
		\dfrac{d}{dt}\left[ (\varphi_t-\psi_t)(e^{tA}k) \right] = \hat{Q}(\varphi_t,\varphi_t)(e^{tA}k)-\hat{Q}(\psi_t,\psi_t)(e^{tA}k).
	\end{align*}
	We use a cutoff $ \hat{Q}=\hat{Q}_n+\hat{Q}-\hat{Q}_n $ and decompose $ \hat{Q}_n $ into its gain and loss part
	\begin{align*}
		\dfrac{d}{dt}\left[ (\varphi_t-\psi_t)(e^{tA}k) \right] &= 	\hat{Q}_n^+(\varphi_t,\varphi_t)(e^{tA}k)- \hat{Q}_n^+(\psi_t,\psi_t)(e^{tA}k) 
		\\
		&- S_n(\varphi_t-\psi_t)(e^{tA}k)+ R_n|e^{tA}k|^2,
	\end{align*}
	where $ R_n $ from \eqref{eq:ProofUniquenessEstimate} is bounded $ \sup_k|R_n(k)|\leq C(T)r_n $. We multiply this with $ e^{S_nt} $ and absorb the loss part of the cutoff operator in the left side yielding
	\begin{align*}
		\dfrac{d}{dt}\left[ e^{S_nt}(\varphi_t-\psi_t)(e^{tA}k) \right] = e^{S_nt} \left[  \hat{Q}_n^+(\varphi_t,\varphi_t)(e^{tA}k)-\hat{Q}_n^+(\psi_t,\psi_t)(e^{tA}k) \right]
		\\
		+ e^{S_nt}R_n|e^{tA}k|^2.
	\end{align*}
	Now we divide by $ |e^{tA}k|^2\neq 0 $, for $ k\neq 0 $, to get
	\begin{align*}
		\dfrac{d}{dt}\left[ \dfrac{e^{S_nt}(\varphi_t-\psi_t)(e^{tA}k)}{|e^{tA}k|^2}\right] = \dfrac{2\skp{Ae^{tA}k}{e^{tA}k}}{|e^{tA}k|^2} \dfrac{e^{S_nt}(\varphi_t-\psi_t)(e^{tA}k)}{|e^{tA}k|^2} \\ +\dfrac{e^{S_nt}}{|e^{tA}k|^2}\left[ \hat{Q}_n^+(\varphi_t,\varphi_t)(e^{tA}k)-\hat{Q}_n^+(\psi_t,\psi_t)(e^{tA}k) \right]  + e^{S_nt}R_n.
	\end{align*}
	We integrate this in time, take absolute values and the supremum over $ k $. We estimate term by term and use Lemma \ref{lem:PosCutoffEstimateP} to bound the term containing $ \hat{Q}^+_n $. Let us introduce for convenience $ h_t(k):=(\varphi-\psi)(t,e^{tA}k)/|e^{tA}k|^2 $. We obtain
	\begin{align*}
		e^{S_nt}\norm[\infty]{h_t} \leq \norm[\infty]{h_0}+ \int_0^t \left[ 2\norm{A}+S_n \right]e^{S_nr}\norm[\infty]{h_r}dr + \int_0^t  e^{S_nr}C(T)r_ndr.
	\end{align*}
	Note that $ \norm[\infty]{h_t}=d_2(\varphi_t,\psi_t) $. We apply Gronwall's Lemma yielding
	\begin{align*}
		e^{S_nt}\norm[\infty]{h_t}\leq  \norm[\infty]{h_0}e^{\left[ 2\norm{A}+S_n \right]t} + C(T)r_n \int_0^te^{S_nr} e^{\left[ 2\norm{A}+S_n \right](t-r)}dr.
	\end{align*}
	We divide by $ e^{S_nt} $ and let $ n\to \infty $ so that
	\begin{align*}
		\norm[\infty]{h_t}\leq \norm[\infty]{h_0}e^{ 2\norm{A} t}.
	\end{align*}
	Recalling $ \norm[\infty]{h_t}=d_2(\varphi_t,\psi_t) $ concludes the proof.
\end{proof}

\subsection{Existence of weak solutions}
Here, we prove part (i) of Proposition \ref{pro:IntExistUniqReg}. For the particular case $ p=2 $, we need to recall the following fact concerning the topology induced by the metric $ d_2 $, see e.g. \cite[Lemma 1, Lemma 2]{VillaniToscani1999}.
\begin{lem}\label{lem:TopologyMetric}
	Define $ D_e\subset \PrM_2 $ by
	\begin{align*}
		D_e=\setdef{f\in\PrM_2}{\int vf(dv)=0, \quad \int|v|^2f(dv)=e}, \quad e\geq0.
	\end{align*}
	Consider $ f^n, f\in \PrM_2 $ for $ n\in\N $. Then, the following statements are equivalent:
	\begin{enumerate}[(i)]
		\item $ f_n,f\in D_e $ and $ f^n\rightharpoonup f $ weakly, i.e. $ \dualbra{\psi}{f^n}\to \dualbra{\psi}{f} $ as $ n\to \infty $ for all $ \psi \in C_b $;
		\item $ \lim_{K\to \infty} \sup_n \int_{|v|\geq K}|v|^2 f_n(dv) =0 $;
		\item $ d_2(f_n,f)\to 0 $ as $ n\to \infty $. 
	\end{enumerate}
\end{lem}

\begin{proof}[Proof of Proposition \ref{pro:IntExistUniqReg}. (i).] 
	We split this into several steps. In Step 1-3 we will cover the case $ p>2 $ and conclude in Step 4 with the case $ p=2 $.
	
	\textit{Step 1.} First of all, for all $ f_0\in\PrM_{p} $, $ p>2 $ one can prove the existence of a unique weak solution $ (f^n_t)_t\in C([0,\infty);\PrM_{p}) $ of the corresponding cutoff equation, i.e. with collision kernel $ Q_n $. The idea is to decompose $ Q_n=Q_n^+-Q_n^-  $ into its gain and loss part. With this one can use the semigroup generated by $ \div_v(Av \cdot )-Q_n^-(\cdot,\cdot) $ to treat the gain term as a perturbation via Duhamel's formula. Application of Banach's fixed point theorem proves existence and uniqueness of a mild solution, which is a weak solution. For more details see \cite[Sec. 4.1]{JamesVelazq2019SelfsimilarProfilesHomoenerg}. Let us note that due to conservation of mass we have a family of probability measures $ (f^n_t)_t\subset \PrM_{p} $.
	
	\textit{Step 2.} To get a solution to the non-cutoff equation on $ [0,T] $ we use a weak compactness argument. An a priori bound of the moments of order $ p>2 $ used before in Step 1 in the proof of uniqueness yields $ \norm[p]{f_t^n}\leq  Ce^{C(p,A)T}\norm[p]{f_0} $. From this, in conjunction with the weak formulation \eqref{eq:WeakForm}, we have the following continuity property independent of $ n\in\N $: for any test function $ \psi\in C^2 $ with $ \norm[C^2]{D^2\psi}<\infty $ and $ 0\leq s<t\leq T $
	\begin{align}\label{eq:equicont}
		\left| \dualbra{\psi}{f^n_t}-\dualbra{\psi}{f^n_s} \right| \leq (t-s)C\left(T,\norm[\infty]{D^2\psi},A,\Lambda\right)\norm[2]{f_0}.
	\end{align}
	
	Since for all $ t\in[0,T] $ the $ p $-moments of $ f_t^n $ are bounded uniformly in $ n\in\N $, the sequence is tight. From this compactness property and the above equicontinuity we can conclude that there is a weakly converging subsequence $ f_t^{n_k}\rightharpoonup f_t $ for all $ t\in[0,T] $. Furthermore, by the weak equicontinuity \eqref{eq:equicont} the mapping $ t\mapsto\dualbra{\psi}{f_t} $ is continuous and $ \norm[p]{f_t} $ is bounded on $ [0,T] $ due to our estimates.
	
	Finally, one has to pass to the limit in the weak formulation. First one considers test functions $ \psi\in C^2_b $, i.e. $ \norm[C^2]{\psi}<\infty $, and arguments similar to the one in \cite[Sec. 4]{MouhotLu12MeasureSolBoltzmannEqPart1}. More precise, the mapping 
	\begin{align*}
		L_{b_k}[\psi] (v,v_*)=\int_{S^2} b_k(n\cdot\sigma) \left\lbrace \psi(v')+\psi(v'_*)-\psi(v_*)-\psi(v) \right\rbrace\, d\sigma
	\end{align*}
	is continuous for each $ k\in\N $, bounded by $ C\Lambda\norm[\infty]{D^2\psi}\,|v-v_*|^2 $ and
	\begin{align*}
		\sup_{|v|+|v_*|\leq R} |L_{b_k}[\psi]-L_{b}[\psi]|\to 0
	\end{align*}
	as $ k\to \infty $ for all $ R>0 $, see \cite[Prop. 2.1]{MouhotLu12MeasureSolBoltzmannEqPart1}. In fact, $ L_{b_k}[\psi] $ has these properties even for $ \psi\in C^2 $ with $ \norm[\infty]{D^2\psi}<\infty $. Using the fact that $ f^n_t\otimes f^n_t \to f_t\otimes f_t $ and the bounds of moments of order $ p> 2 $ to treat velocities $ |v|+|v_*|\geq R $, we obtain
	\begin{align*}
		\dualbra{\psi}{Q_n(f^n_t,f_t^n)}\to \dualbra{\psi}{Q(f_t,f_t)}, \quad n\to \infty.
	\end{align*} 
	Using the dominated convergence theorem for the time integral in the weak formulation \eqref{eq:WeakForm}, we can pass to the limit.
	
	\textit{Step 3.} Finally, we prove that every weak solution $ (f_t)_t\subset \PrM_{p} $, $ p>2 $, has the property $ t\mapsto \dualbra{\psi}{f_t}\in C^1 $ for $ \psi\in C^2 $ with $ \norm[\infty]{D^2\psi}<\infty $. First of all, our a priori bounds in Step 2 yield $ \norm[p]{f_t} \leq C(T,A)\norm[p]{f_0} $ for all $ t\in[0,T] $. The weak formulation and \eqref{eq:IntrodCancellation} shows that $ \dualbra{\psi}{f_t} $ is Lipschitz. By approximation it follows that $ \dualbra{\psi}{f_t}  $ is continuous for all $ \psi\in C_b $, i.e. $ t\mapsto f_t $ is weakly-$ * $ continuous. Using the moment bounds of order $ p>2 $, this is also true in the case that $ \psi\ $ is merely continuous and has quadratic growth, i.e. $ |\psi(v)|\leq C(1+|v|^2) $ for some $ C\geq0 $.
	
	To prove that $ t\mapsto \dualbra{\psi}{f_t}\in C^1 $, it suffices to show that the integrand in the weak formulation is continuous. For a test function $ \psi \in C^2 $ with $ \norm[C^2]{D^2\psi}<\infty $ as was noticed above $ L_b[\psi] $ is continuous and has at most quadratic growth. Now, $ f_t\otimes f_t $ has the same continuity properties as $ f_t $ and we can conclude that $ \dualbra{\psi}{Q(f_t,f_t)} $ is continuous. Then, also the drift term $ \dualbra{Av\cdot \nabla\psi}{f_t} $ is continuous, since $ Av\cdot \nabla\psi $ has quadratic growth due to $ |\nabla \psi(v)|\leq C(1+|v|) $, recalling $ \norm[C^2]{D^2\psi}<\infty $.
	
	\textit{Step 4.} Now we treat the case $ p=2 $ with the case $ p>2 $ by approximation. Let $ (f_0^n)_n\subset\PrM_p $, $ p>2 $, be an approximation of $ f_0 $ such that $ d_2(f^n_0,f_0)\to0 $, in particular the energy is the same. The respective solutions $ (f_t^n)_t\subset\PrM_p $ are a Cauchy sequence by \eqref{eq:UniquenssMetric} in part (ii) of Proposition \ref{pro:IntExistUniqReg}. Hence, for all $ t\geq0 $ we get $ d_2(f^n_t,f_t)\to 0 $ as $ n\to \infty $ for the limit $ (f_t)_t\subset\PrM_2 $. By Lemma \ref{lem:TopologyMetric} we can pass to the limit in the weak formulation as in the case $ p>2 $. Recall that $ L_b[\psi] $ is continuous with quadratic growth for a test function $ \psi\in C^2_b $, so that $ \dualbra{\psi}{Q(f_t^n,f_t^n)}\to \dualbra{\psi}{Q(f_t,f_t)} $ as $ n\to \infty $. Hence, $ (f_t)_t $ is a weak solution.
	
	Finally, we want to show that $ \dualbra{\psi}{f_t} $ is $ C^1 $ for $ \psi\in C^2 $ with $ \norm[\infty]{D^2\psi}<\infty $. As in the case $ p>2 $, it suffices to show that $ \dualbra{\psi}{f_t} $ is continuous in $ t\geq0 $ for $ \psi $ continuous with at most quadratic growth. Due to Lemma \ref{lem:TopologyMetric} (ii) this is equivalent to continuity with respect to $ d_2 $, i.e. $ d(f_t,f_s)\to 0 $ as $ |t-s|\to0 $.
	
	In order to prove this, let $ (f_t^n)_t\subset\PrM_{p} $, $ p>2 $, be an approximation as before. By \eqref{eq:UniquenssMetric} we have $ d_2(f_t,f^n_t)\to 0 $ as $ n\to \infty $ uniformly in $ t\in[0,T] $ for some fixed $ T<\infty $. Since the approximations $ (f_t^n)_t $ are continuous w.r.t. $ t\geq0 $, when testing with continuous functions with quadratic growth, we obtain from Lemma \ref{lem:TopologyMetric} that $ d_2(f_t^n,f_s^n)\to 0 $ as $ |t-s|\to 0 $. Hence, for any sequence $ t_k\to t $ in $ [0,T] $ we get (by an $ \varepsilon/3 $-argument)
	\begin{align*}
		\limsup_{k\to \infty} \, d_2(f_{t_k},f_t)\leq \limsup_{k\to\infty}\left\lbrace  d_2(f_{t_k},f^n_{t_k})+d_2(f_{t_k}^n,f_{t}^n)+d_2(f_t^n,f_t) \right\rbrace \leq \varepsilon
	\end{align*}
	for all $ \varepsilon>0 $ and all sufficiently large $ n\in\N $. Thus, we obtain the required continuity.
\end{proof}

\subsection{Regularity of weak solutions}
We finally prove the regularity result in Proposition \ref{pro:IntExistUniqReg} (iii). We follow \cite{MorimotoYang2015SmoothingHomogBoltzmannEqMeasure}, which covers the homogeneous Boltzmann equation, i.e. $ A=0 $. We consider the Fourier transform $ (\psi_t)_t $ of a weak solution $ (f_t)_t\subset \PrM_2 $, hence
\begin{align}\label{eq:RegularityFTgenBE}
	\partial_t\psi_t(\xi) =- A^\top \xi\cdot \nabla\psi_t(\xi) + \hat{Q}(\psi_t,\psi_t)(\xi).
\end{align}
With the continuity property of $ t\mapsto f_t $, stated in part (i) of Proposition \ref{pro:IntExistUniqReg}, one can show that $ (t,\xi)\mapsto \psi(t,\xi), \, \partial_t\psi(t,\xi) $ is continuous. Let us first state a coercivity estimate analogous to the one in \cite[Lemma 1.4]{MorimotoYang2015SmoothingHomogBoltzmannEqMeasure}. As in the original work, the non-cutoff assumption \eqref{eq:AssumptCorssSectionNonCutoff} is essential.

\begin{lem}\label{lem:CoercivityEstimate}
	Consider $ (\psi_t)_t $ the Fourier transform of a solution $ (f_t)_t\subset \PrM_2 $ to \eqref{eq:GenHomoengBE}, i.e. $ (\psi_t)_t $ solves \eqref{eq:RegularityFTgenBE}. Assume that $ f_0\in\PrM_2 $ is not a Dirac measure. Then, there is $ T>0 $ and a constant $ C>0 $, both depending on $ f_0\in\PrM_2 $, such that for all $ h\in L^2_2(\R^3) $ and all $ t\in[0,T] $
	\begin{align}\label{eq:CoercivityEstimate}
		\begin{split}
			&t\int_{\R^3}\left\langle \xi \right\rangle^{2s}|h(\xi)|^2d\xi
			\\
			&\leq C\left\lbrace \int_{\R^3}\int_{S^2}b(\hat{\xi}\cdot \sigma)(1-|\psi(t,\xi_-)|)d\sigma |h(\xi)|^2 \, d\xi + \int_{\R^3}|h(\xi)|^2\,d\xi\right\rbrace.
		\end{split}
	\end{align}
	The constant $ s\in(0,1) $ is given in \eqref{eq:AssumptCorssSectionNonCutoff}.
\end{lem}
Let us not that the first integral on the right-hand side in \eqref{eq:CoercivityEstimate} is finite. To see this, one can use
\begin{align*}
	0\leq 1-|\psi(t,\xi_-)| \leq \dfrac{1-|\psi(t,\xi_-)|^2}{1+|\psi(t,\xi_-)|} \leq 1- \psi(t,\xi_-) \, \overline{\psi(t,\xi_-)}
\end{align*}
and now apply a Taylor expansion up to order two to get
\begin{align*}
	0\leq 1-|\psi(t,\xi_-)| \leq C\norm[C^2]{\psi_t}\left( |\xi_-|^2+|\xi_-|^4\right).
\end{align*}
Using spherical coordinates in the integral w.r.t $ \sigma $ with $ \cos\theta = \hat{\xi}\cdot \sigma $ and $ \xi_-=(\xi-|\xi|\sigma)/2 $ shows $ |\xi_-|=|\xi|\sin(\theta/2)\leq |\xi|\theta $. Hence, our assumption on the kernel , cf. \eqref{eq:AssumptCrossSection}, and $ h\in L^2_2 $ yield the finiteness.
\begin{proof}[Proof of Lemma \ref{lem:CoercivityEstimate}]
	The proof is very close to the original one for \cite[Lemma 1.4]{MorimotoYang2015SmoothingHomogBoltzmannEqMeasure}, since in most parts the equation \eqref{eq:RegularityFTgenBE} is not used but only the fact that $ \psi $ and $ \partial_t\psi $ are continuous.
	
	Let us give a brief sketch of the proof and the required adaptations. There are two cases. In the first case, one assumes that $ f_0 $ is not concentrated on a straight line and in the second case the opposite. Note that by our assumption $ f_0 $ is not a Dirac measure. The overall goal is to show that 
	\begin{align}\label{eq:ProofRegularityInequality}
		1-|\psi(t,\xi_-)|\geq c_0t
	\end{align}
	for some $ c_0>0 $ and $ \xi\in \R^3 $, $ \sigma\in S^2 $ belonging to certain regions and $ t\in[0,T] $, $ T>0 $. Let us choose spherical coordinates with $ \cos\theta = \hat{\xi}\cdot \sigma $ so that $ |\xi_-|= |\xi|\sin(\theta/2) $. The term $ \left\langle \xi \right\rangle^{2s} $ on the left integral in \eqref{eq:CoercivityEstimate} will be bounded using the singular behavior of the collision kernel when $ \theta\to 0 $. Note that at the same time inequality \eqref{eq:ProofRegularityInequality} fails when $ \xi_-=0 $, since $ \psi(t,0)=1 $. Nevertheless, since $ |\xi_-|=|\xi|\sin(\theta/2) $, one can find regions for $ \xi\in \R^3 $, $ \sigma\in S^2 $ such that $ |\xi| $ is arbitrarily large, $ \theta $ close to zero and $ |\xi_-| $ bounded away from zero. Evaluating the $ \sigma $-integral, in conjunction with the singularity $ \sin\theta \, b(\cos\theta) \sim \theta^{-1-2s} $, allows to estimate large values of $ |\xi| $ in \eqref{eq:CoercivityEstimate}. The part of the integral, where $ |\xi| $ is of order one, is bounded by the second term on the right-hand side in \eqref{eq:CoercivityEstimate}.
	
	In the first case, where $ f_0 $ is not concentrated on a line, it is proved in \cite{MorimotoYang2015SmoothingHomogBoltzmannEqMeasure} that there exist $ e\in S^2 $, $ \lambda>0 $ and $ \kappa_0>0 $, depending on $ f_0 $, such that
	\begin{align*}
		1-|\psi(0,\xi)|\geq \kappa_0>0
	\end{align*}
	for all $ \xi\in \R^3 $ with $ \xi\cdot e=0 $, $ |\xi|=\lambda $. By continuity one obtains
	\begin{align*}
		1-|\psi(t,\xi)|\geq \kappa_0/2
	\end{align*}
	for all $ t\in[0,T] $ with $ T>0 $ sufficiently small and $ \xi \in \mathcal{C}_{\delta,\varepsilon} $ with
	\begin{align*}
		\mathcal{C}_{\delta,\varepsilon} := \setdef{\eta\in\R^3}{\lambda-\delta\leq|\eta|\leq\lambda+\delta, \; \left|\hat{\eta}\cdot e \right|\leq\varepsilon},
	\end{align*}
	$ \varepsilon, \delta>0 $ sufficiently small. In order to get \eqref{eq:ProofRegularityInequality} for $ \xi\in \R^3 $, $ |\xi| $ large, we need to find regions for $ \sigma \in S^2 $ such that $ \xi_-\in \mathcal{C}_{\delta,\varepsilon} $. A geometrical argument (see \cite{MorimotoYang2015SmoothingHomogBoltzmannEqMeasure}) shows that for all $ \xi\in \R^3 $, $ |\xi| \geq R $, $ R>0 $ large, but fixed, one can find a set $ B(\xi)\subset S^2 $ such that $ \sigma\in B(\xi) $ implies $ \xi_-\in \mathcal{C}_{\delta,\varepsilon} $. To describe the main idea, let us use spherical coordinates $ (\theta,\varphi)\in(0,\pi)\times(0,2\pi) $ with $ \cos\theta= \hat{\xi}\cdot \sigma $. One can see that $ |\xi_-|=|\xi|\sin(\theta/2) $ and hence, we need for $ \xi_-\in\mathcal{C}_{\delta,\varepsilon} $
	\begin{align*}
		2\arcsin\left( \dfrac{\lambda-\delta}{|\xi|} \right) \leq \theta \leq 2\arcsin\left( \dfrac{\lambda+\delta}{|\xi|} \right).
	\end{align*} 
	Furthermore, one can find an interval $ I_\xi $ for $ \varphi\in I_{\xi}\subset(0,2\pi) $, so that, with the above condition for $ \theta $, we get $ \xi_-\in \mathcal{C}_{\delta,\varepsilon} $. Then, we can estimate
	\begin{align}\label{eq:ProofRegularityCoercivity}
		\int_{B(\xi)}b(\hat{\xi}\cdot \sigma)(1-|\psi(t,\xi_-)|)d\sigma \geq C\kappa_0 \left\langle \xi \right\rangle^{2s}.
	\end{align}
	
	For the second case, when $ f_0 $ is supported on a straight line, we need to adapt the arguments in \cite{MorimotoYang2015SmoothingHomogBoltzmannEqMeasure}, since now the equation \eqref{eq:RegularityFTgenBE} is used. Here, we have in addition the drift term. We can assume w.l.o.g. that $ f_0 $ is supported on the line spanned by $ e_3=(0,0,1) $, i.e. it is of the form $ f_0(dv)=\delta_0(dv')f_{03}(dv_3) $, $ v'=(v_1,v_2) $. Hence, the Fourier transform has the form $ \psi_0(\xi)= \psi_{03}(\xi_3) $. Arguing as in \cite{MorimotoYang2015SmoothingHomogBoltzmannEqMeasure} allows to show that there is a $ \lambda>0 $, $ c_1>0 $, depending on $ f_0 $, such that for all $ \xi\in \R^3 $ with $ \xi\cdot e_3=0 $, $ |\xi|=\lambda $
	\begin{align*}
		2\text{Re} [\hat{Q}(\psi_0,\psi_0)\overline{\psi_0}](\xi) = 2\text{Re} \, \int_{S^2} b\left(  \dfrac{\xi}{|\xi|}\cdot \sigma \right) \left( \psi_0^+\, \psi_0^- \, \overline{\psi_0}-|\psi_0|^2 \right) \, d\sigma  \leq -c_1.
	\end{align*} 
	Now, we use equation \eqref{eq:RegularityFTgenBE} and write $ \tilde{\psi}(t,\xi) := \psi(t,e^{-A^\top t}\xi) $ which yields
	\begin{align*}
		\partial_t \left[ \left| \tilde{\psi}(t,\xi) \right|^2 \right]\mid_{t=0} = 2\text{Re}\left[ \hat{Q}(\psi_0,\psi_0)(\xi)\bar{\psi_0}\right] (\xi) \leq -c_1
	\end{align*}
	for all $ \xi\in \R^3 $ such that $ |\xi|=\lambda  $, $ \xi\cdot e_3=0 $. By continuity we can extend this to $ (t,\xi)\in [0,T]\times \Gamma $
	\begin{align*}
		\partial_t \left[ \left| \tilde{\psi}(t,\xi) \right|^2 \right]\leq -c_1/2,
	\end{align*}
	where $ T>0 $ is sufficiently small and 
	\begin{align*}
		\Gamma = \setdef{\xi \in\R^3}{\left| |\xi|-\lambda \right|\leq \delta, \; \left| \dfrac{\xi}{|\xi|}\cdot e_3  \right|\leq \varepsilon },
	\end{align*}
	$ \varepsilon, \delta>0 $ sufficiently small. This implies for all $ (t,\xi_-)\in [0,T]\times \Gamma $, using the mean value theorem,
	\begin{align*}
		1-\left| \psi \left( t,e^{-A^\top t}\xi_- \right) \right|= 1-|\tilde{\psi}(t,\xi_-)|\geq \dfrac{1-|\tilde{\psi}(t,\xi_-)|^2}{2}\geq \dfrac{1}{2}\left[ 1-|\tilde{\psi}(0,\xi_-)|^2+c_1t/2 \right] \geq \dfrac{c_1t}{4},
	\end{align*}
	by noting $ 1-|\tilde{\psi}(0,\xi_-)|^2= 1-|\psi_0(\xi_-)|^2\geq 0 $. We can formulate this in the following way: for all $ (t,\xi_-)\in [0,T]\times e^{-A^\top t}\Gamma $ we have
	\begin{align}\label{eq:LemProofCoercivityEst}
		1-|\psi(t,\xi_-)|\geq \dfrac{c_1t}{4},
	\end{align}
	where we defined
	\begin{align*}
		e^{-A^\top t}\Gamma = \setdef{\eta \in\R^3}{\eta = e^{-A^\top t}\xi, \; \xi \in\Gamma}.
	\end{align*}
	Since $ e^{-A^\top t}  $ is close to the identity matrix for small times, we can choose $ T, \delta, \varepsilon>0 $ smaller, if necessary, to get this for all $ (t,\xi_-)\in [0,T]\times \Gamma $. Again, one needs to find conditions for $ \xi, \sigma $ such that $ \xi_-\in \Gamma $, in the same manner as in the previous case, see \cite{MorimotoYang2015SmoothingHomogBoltzmannEqMeasure}. This yields for $ \xi\in \R^3 $ with $ |\xi|\geq R $ and $ \xi/|\xi| $ close to $ \pm e_3 $ a set $ B(\xi)\subset S^2 $. As for \eqref{eq:ProofRegularityCoercivity} we can estimate
	\begin{align*}
		\int_{B(\xi)}b(\hat{\xi}\cdot \sigma)(1-|\psi(t,\xi_-)|)d\sigma \geq Ct\left\langle \xi \right\rangle^{2s}.
	\end{align*}
	For other directions of $ \xi $, the original arguments in \cite{MorimotoYang2015SmoothingHomogBoltzmannEqMeasure} can be used without changes, since the equation \eqref{eq:RegularityFTgenBE} is not used but only the continuity of $ \psi $. 
\end{proof}

\begin{proof}[Proof of Proposition \ref{pro:IntExistUniqReg}. (iii).]
	We follow the arguments in \cite[Proof of Thm. 1.3]{MorimotoYang2015SmoothingHomogBoltzmannEqMeasure}. 
	
	\textit{Step 1.} In this step, we prove smoothness only for $ 0<t\leq T/2 $ with $ T>0 $ as in Lemma \ref{lem:CoercivityEstimate}. Define
	\begin{align*}
		M_\delta(t,\xi):=\left\langle \xi \right\rangle^{Nt^2-4}\left\langle \delta \xi \right\rangle^{-NT^2-4}, \quad N\in\N,
	\end{align*}
	where $ N $ is chosen large enough such that $ h= M_\delta\psi\in L^2_2 $ for $ t\leq T/2 $. We multiply the equation \eqref{eq:RegularityFTgenBE} with $ M_\delta^2 \overline{\psi} $, integrate over $ \R^3 $ w.r.t. $ \xi $ and take the real part. Note that $ M_\delta $ decays sufficiently fast at infinity for $ t\leq T/2 $. We then obtain the equation
	\begin{align}\label{eq:RegularityProofTestingEq}
		2\text{Re}\int_{\R^3}\left[ \partial_t\psi\, M_\delta^2 \, \overline{\psi}\right]\, d\xi = -2\text{Re}\int_{\R^3}A^\top \xi\cdot \nabla\psi \, M_\delta^2 \, \overline{\psi} \, d\xi + 2\text{Re} \int_{\R^3} \hat{Q}(\psi,\psi) \, M_\delta^2 \,\overline{\psi}\, d\xi
	\end{align}
	For the collision term we use the coercivity estimate \eqref{eq:CoercivityEstimate} with $ h=|M_\delta\psi|\in L^2_2 $ and arguments as in \cite{MorimotoYang2015SmoothingHomogBoltzmannEqMeasure}, which do not use equation \eqref{eq:RegularityFTgenBE}. We obtain for some constants $ C_1,C_2>0 $
	\begin{align*}
		2\text{Re} \int_{\R^3} \hat{Q}(\psi_t,\psi_t)(\xi) \, \left(  M_\delta^2 \,\overline{\psi} \right) (t,\xi)\, d\xi \leq C_1\int_{\R^3}|M_\delta(t,\xi)\, \psi(t,\xi)|^2d\xi - C_2t\int_{\R^3}\left\langle \xi \right\rangle^{2s} |M_\delta(t,\xi)\, \psi(t,\xi)|^2d\xi.
	\end{align*}
	On the left-hand side of \eqref{eq:RegularityProofTestingEq} we use
	\begin{align*}
		2\text{Re}\left[ \partial_t\psi\, M_\delta^2 \, \overline{\psi}\right](t,\xi) = \partial_t\left[ |M_\delta\, \psi|^2\right] (t,\xi) - 4Nt\log\left\langle \xi \right\rangle \,|M_\delta\, \psi|^2(t,\xi).
	\end{align*}
	For the first term on the right in \eqref{eq:RegularityProofTestingEq} we compute with partial integration
	\begin{align*}
		&-\int_{\R^3}A^\top \xi\cdot \nabla\psi(t,\xi) \, M_\delta(t,\xi)^2 \, \overline{\psi}(t,\xi) \, d\xi = \int \trace(A)|M_\delta\psi|^2(t,\xi)d\xi \\
		&+ \int_{\R^3} 2A^\top \xi\cdot \nabla M_\delta M_\delta \, |\psi(t,\xi)|^2 \, d\xi + \int_{\R^3}A^\top \xi\cdot\nabla\overline{\psi}(t,\xi)  \, M_\delta(t,\xi)^2 \, \psi(t,\xi) \, d\xi.
	\end{align*}
	Note that the last term coincides up to the sign with the term on the left-hand side due to $ \overline{\psi}(t,\xi)=\psi(t,-\xi) $ and the change of variables $ \xi\mapsto-\xi $. Furthermore, $ A^\top \xi\cdot \nabla M_\delta(t,\xi) $ is bounded by $ C(T,A)M_\delta $ for some constant $ C(T,A)>0 $. So we can estimate
	\begin{align*}
		\left| 2\text{Re} \int_{\R^3}A^\top \xi\cdot\nabla\psi \, M^2_\delta \overline{\psi} \, d\xi \right| \leq C(T,A)\int_{\R^3}|M_\delta\psi|^2(t,\xi) \, d\xi.
	\end{align*}
	Putting all terms together in \eqref{eq:RegularityProofTestingEq} yields
	\begin{align}\label{eq:RegularityGronwall}
		\dfrac{d}{dt}\int_{\R^3}|M_\delta(t,\xi)\psi(t,\xi)|^2 \, d\xi &\leq C(T,A)\int_{\R^3}|M_\delta(t,\xi)\, \psi(t,\xi)|^2d\xi \\
		&+t\int_{\R^3}\left[ 4N\log\left\langle \xi \right\rangle-C_2\left\langle \xi \right\rangle^{2s} \right] |M_\delta(t,\xi)\, \psi(t,\xi)|^2d\xi.
	\end{align}
	Since $ \left\langle \xi \right\rangle^{2s}/\log\left\langle \xi \right\rangle \to \infty $ as $ |\xi|\to \infty $, the last term on the right can be absorbed in the first term. Using Gronwall's lemma and letting $ \delta\to 0 $ one obtains
	\begin{align*}
		\int_{\R^3}|\left\langle \xi \right\rangle^{Nt^2-4}\psi(t,\xi)|^2\, d\xi\leq C \int_{\R^3}|\left\langle \xi \right\rangle^{-4}\psi_0(\xi)|^2 \, d\xi.
	\end{align*}
	Since this holds for all $ N\in\N $, we have $ f(t,\cdot)\in\bigcap_{k\in\N} H^k(\R^3) $ for $ 0<t\leq T/2 $.
	
	\textit{Step 2.} In this step, we extend the regularity to times $ t\in [t_0,T'] $ with $ T'>T/2 $ arbitrary and some $ t_0\in (0,T/2) $ fixed.
	
	Since $ f_{t_0}\in L^2\cap L^1_2 $, with $ L^1_2(\R^3) $ the weighted $ L^1 $-space with weight $ (1+|v|^2) $, $ f_{t_0} $ has finite entropy, i.e.
	\begin{align*}
		H(f_{t_0}):=\int_{\R^3} f(t_0,v) \log f(t_0,v)\, dv <\infty.
	\end{align*}
	To see this, split the integral into the regions $ \{f\geq 1\} $, $ \{\mu\leq f\leq 1\} $ and $ \{f\leq \mu\} $. On $ \{f\geq 1\} $ one uses the $ L^2 $-bound and on $ \{\mu\leq f\leq 1\} $ the $ L^1_2 $-bound. For the last region $ \{f\leq \mu\} $ apply the elementary inequality $ x\log x \geq x-y-x\log y $, $ 0<x\leq y $, with $ x=f $, $ y=\mu $.
	
	We would like to propagate this bound on $ [t_0,T'] $. Testing formally the equation \eqref{eq:GenHomoengBE} with $ \log f(t,v) $ and using that $ \dualbra{Q(f,f)}{\log f}\leq 0 $ yields with a Gronwall argument
	\begin{align}\label{eq:EntropyBound}
		H(f_t)\leq H(f_{t_0})+C(T',A), \quad t\in [t_0,T'].
	\end{align}
	This is just formal, since we do not know yet that $ f_t $ is a function for times $ t>t_0 $. We can make it rigorous in the following way. We can construct a weak solution to \eqref{eq:GenHomoengBE} in $ L^1_2 $ with finite entropy with initial condition $ f_{t_0} $. Such a construction was done in \cite{Cercignani1989ExistenceHomoenergetic} for homoenergetic solutions with angular cutoff using classical arguments from \cite{Arkeryd72OnBoltzmannEq}. Here, we use weak solutions to compensate also for the angular singularity. The weak form is exactly the same as in Definition \ref{def:WeakForm} but now $ f_t $ are functions in $ L^1_2 $. Our existence proof can be used without much change. The only difference is the compactness argument in order to pass from an approximating sequence $ f^n $ to $ f $. For this one, uses the additional bound on the entropy \eqref{eq:EntropyBound}, which is uniform in $ n\in\N $. The weak $ L^1 $-compactness, following from the Dunford-Pettis theorem, ensures that the limit $ f $ is an $ L^1 $-function. See e.g. the discussion in \cite[Sec. 4]{Villani1998NewClassWeakSol} for the homogeneous Boltzmann equation without angular cutoff. Since we proved already uniqueness on the level of measure-valued solutions, the so constructed solution coincides with our measure-valued solution. As a consequence, $ f_t\in L^1_2 $, $ t\in [t_0,T'] $ and the bound \eqref{eq:EntropyBound} holds.
	
	We continue to follow the final arguments in \cite{MorimotoYang2015SmoothingHomogBoltzmannEqMeasure}. We know use Lemma 3 in \cite{AlexandreDesvillettesVillaniWennberg2000EntropyDissip}, which ensures that any function $ g\in L^1_2 $ with finite entropy $ H(g)<\infty $ satisfies 
	\begin{align*}
		1-|\psi(\xi) |\geq \kappa \min(1,|\xi|^2)
	\end{align*}
	with $ \psi = \F[g] $ the Fourier transform. Here, $ \kappa>0 $ only depends on upper bounds of $ \norm[L^1_2]{g} $ and $ H(g) $. As a consequence of the bound
	\begin{align*}
		\sup_{t\in[t_0,T']} \left[ \norm[L^1_2]{f_t} + H(f_t)\right] <\infty,
	\end{align*}
	we have 
	\begin{align*}
		1-|\psi(t,k) |\geq \kappa \min(1,|\xi|^2)
	\end{align*}
	with $ \kappa>0 $ independent of $ t\in[t_0,T'] $. This yields a coercivity estimate similar to \eqref{eq:CoercivityEstimate} without the condition of small times
	\begin{align*}
		\int_{\R^3}\left\langle \xi \right\rangle^{2s}|h(\xi)|^2d\xi
		\leq C\int_{\R^3}\int_{S^2}b(\hat{\xi}\cdot \sigma)(1-|\psi(t,\xi_-)|)d\sigma |h(\xi)|^2 \, d\xi, \quad t\in [t_0,T'].
	\end{align*}
	The arguments in Step 1 hence apply for all times $ t\in[t_0,T'] $ yielding instead of \eqref{eq:RegularityGronwall}
	\begin{align*}
		\dfrac{d}{dt}\int_{\R^3}|M_\delta(t,\xi)\psi(t,\xi)|^2 \, d\xi &\leq C(T,A)\int_{\R^3}|M_\delta(t,\xi)\, \psi(t,\xi)|^2d\xi \\
		&+\int_{\R^3}\left[ 4NT'\log\left\langle \xi \right\rangle-C_2\left\langle \xi \right\rangle^{2s} \right] |M_\delta(t,\xi)\, \psi(t,\xi)|^2d\xi.
	\end{align*} 
	Thus, as above we get $ f(t,\cdot)\in\bigcap_{k\in\N} H^k(\R^3) $ for $ t\in[t_0,T'] $.  Since $ T'>0 $ was arbitrary, this concludes the proof.
\end{proof}

\section{Self-similar solutions and self-similar asymptotics}
\label{sec:ExUniStabSelfSimilar}
In this section, we will give the proof of Theorem \ref{thm:MainTheorem}. As in our assumption of the main theorem, we consider always weak solutions $ (f_t)_t\subset \PrM_p $ with $ p>2 $. By Proposition \ref{pro:IntExistUniqReg} (iii) they are smooth for positive times when $ f_0 $ is not a Dirac measure. However, since the smoothness of solutions does not play any role in our arguments, we work within $ \PrM_p $.

Let us briefly summarize the strategy of the proof of Theorem \ref{thm:MainTheorem}. First, we study the linear ordinary differential equations satisfied by the second moments of a solution to \eqref{eq:GenHomoengBE}. We do this by analyzing for small $ A $ the spectrum of the involved operator, which acts on symmetric matrices, using a perturbation argument. This allows to show that there is a simple, real eigenvalue $ 2\bar{\beta}(A) $, which has largest real part among all eigenvalues, and its eigenvector $ \bar{N}(A)\in \R^{3\times 3} $ is symmetric, positive definite. With this we prove the existence of a self-similar solution $ f_{st} $, cf. Theorem \ref{thm:MainTheorem} (i), by an application of Povzner estimates and a fixed point argument. This strategy is reminiscent of the arguments in the cutoff case \cite{JamesVelazq2019SelfsimilarProfilesHomoenerg}.

For the stability result in Theorem \ref{thm:MainTheorem} (ii), we use the Fourier transform and as a key ingredient a comparison principle based on the linearized cutoff equation in Fourier space. With this, and a longtime analysis of the second order moment equations, we conclude the stability statement. Similar arguments in Fourier space have been used in \cite{BobylevVelazq2020SelfsimilarAsymp}.

Finally, the last statement, Theorem \ref{thm:MainTheorem} (iii), is a result of successive application of Povzner estimates. At this point, we need to choose $ \norm{A} $ small enough to deal with the drift term.

\subsection{Existence of self-similar solutions}
\label{subsec:Existence}
For the existence result in Theorem \ref{thm:MainTheorem} (i), let us recall the following version of the Povzner estimate due to Mischler and Wennberg \cite[Sect. 2]{MischlerWennberg1999HomogBE}. As was noticed e.g. in \cite[Appendix]{Villani1998NewClassWeakSol}, their calculation also works in the non-cutoff case.

\begin{lem}\label{lem:Povzner}
	Let $ \varphi(v)=|v|^{2+\delta} $ for $ \delta>0 $. Then we have the following decomposition
	\begin{align*}
		\int_{S^2}b(n\cdot \sigma )\left\lbrace \varphi'_*+\varphi'-\varphi_*-\varphi \right\rbrace d\sigma = G(v,v_*)-H(v,v_*)
	\end{align*}
	with $ G,H $ satisfying
	\begin{align}
		\begin{split}\label{eq:PovznerDecomposition}
			G(v,v_*) &\leq C\Lambda(|v||v_*|)^{1+\delta/2},
			\\
			H(v,v_*) &\geq c\Lambda(|v|^{2+\delta}+|v_*|^{2+\delta}) \left( 1-\ind_{\set{|v|/2< |v_*|< 2|v|}} \right).
		\end{split}
	\end{align}
	Hence, for any $ f\in\PrM_{p} $, with $ 2<p\leq 4 $, $ p=2+\delta $ it holds
	\begin{align}\label{eq:PovznerBound}
		\begin{split}
			\int_{\R^3\times\R^3}(G(v,v_*)-H(v,v_*))f(dv)f(dv_*) 
			\\
			\leq C'\Lambda\norm[2]{f}^2-c'\Lambda\norm[p]{f}.
		\end{split}
	\end{align}
\end{lem}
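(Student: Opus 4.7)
The plan is to adapt the classical Povzner decomposition of Mischler and Wennberg to the non-cutoff case, where the angular singularity at $\theta=0$ must be tamed by azimuthal cancellation. Using energy conservation $|v'|^2 + |v'_*|^2 = E := |v|^2+|v_*|^2$, parametrise post-collisional energies by $s = s(\sigma) := |v'|^2/E \in [0,1]$ and set $s_0 := |v|^2/E$. Then
\begin{align*}
\varphi'_* + \varphi' - \varphi_* - \varphi = E^{1+\delta/2}\bigl(h(s) - h(s_0)\bigr), \qquad h(s) := s^{1+\delta/2}+(1-s)^{1+\delta/2},
\end{align*}
and $h$ is convex on $[0,1]$ with unique maxima $h(0)=h(1)=1$; in particular for any $\rho\in(0,1/2)$ there is $\eta>0$ with $h\leq 1-\eta$ on $[\rho,1-\rho]$.

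I would split $S^2 = S_e \cup S_p$ with an equatorial zone $S_e = \{\theta_0 \leq \theta \leq \pi-\theta_0\}$ and a polar zone $S_p$. On $S_e$ the kernel $b$ is bounded (by the locally bounded assumption), so one extracts the positive part of the integrand, bounded via
\begin{align*}
E^{1+\delta/2}(1-h(s_0)) = (|v|^2+|v_*|^2)^{1+\delta/2} - |v|^{2+\delta} - |v_*|^{2+\delta} \leq C(|v||v_*|)^{1+\delta/2}
\end{align*}
(a routine Taylor expansion of $(a+b)^{1+\delta/2}$), and places it into $G$. The negative part, whenever $s_0\notin[\rho,1-\rho]$, i.e.\ whenever $|v|$ and $|v_*|$ differ by at least a factor of two, integrates to $\geq cE^{1+\delta/2}\geq c(|v|^{2+\delta}+|v_*|^{2+\delta})$ and yields the lower bound for $H$ together with the comparable-size indicator factor. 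On $S_p$ the singularity of $b$ requires a second-order Taylor expansion of $\varphi$: writing $\sigma=\cos\theta\, n + \sin\theta\, e_\perp(\phi)$, the $\phi$-average of $v'-v$ equals $\tfrac{1}{2}(v_*-v)(1-\cos\theta)$, so after $\phi$-integration the first-order contribution of $\varphi'+\varphi'_*-\varphi-\varphi_*$ carries a factor $(1-\cos\theta)\sim\theta^2$, and the quadratic Taylor remainder is already $O(\theta^2 |v-v_*|^2(|v|^\delta+|v_*|^\delta))$. Both are integrable against $\sin\theta\, b(\cos\theta)\, d\theta$ by \eqref{eq:AssumptCrossSection} and can be dominated by a multiple of $(|v||v_*|)^{1+\delta/2}$, so they are absorbed into $G$.

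For \eqref{eq:PovznerBound}, integrating $G\leq C(\Lambda)(|v||v_*|)^{1+\delta/2}$ against $F\otimes F$ and using $(|v||v_*|)^{1+\delta/2}\leq\tfrac{1}{2}(|v|^2|v_*|^\delta+|v|^\delta|v_*|^2)$ together with a Hölder interpolation of $\norm[\delta]{F}$ between $\norm[0]{F}$ and $\norm[2]{F}$ (valid since $\delta\leq 2$) produces the $C'(\Lambda)\norm[2]{F}^2$ contribution. The only region on which the lower bound for $H$ is trivial is $\{|v|/2<|v_*|<2|v|\}$, on which $|v|^{p}+|v_*|^{p}\leq 2^{p+1}(|v||v_*|)^{1+\delta/2}$; this missing piece has the same shape as the $G$-integral and is absorbed into $C'(\Lambda)$, leaving the clean $-c'(\Lambda)\norm[p]{F}$ term. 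Uniformity along a cutoff sequence $b_n\nearrow b$ is automatic: all constants depend monotonically on $\Lambda_n\leq\Lambda$, while monotone convergence gives $\int_{S_e} b_n\,d\sigma\to\int_{S_e} b\,d\sigma>0$, whence $\inf_n c'(\Lambda_n)>0$.

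The main obstacle is the polar zone $S_p$: the bare pointwise integrand is not $\sigma$-integrable against $\sin\theta\, b(\cos\theta)\,d\sigma$, so the second-order azimuthal cancellation is unavoidable. It is precisely this step that uses the full strength of \eqref{eq:AssumptCrossSection} and, implicitly, of the restriction $\kappa<1$ in the assumption on $b$.
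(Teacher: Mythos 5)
Your derivation of \eqref{eq:PovznerBound} from the decomposition \eqref{eq:PovznerDecomposition} is essentially the paper's argument (the paper uses the same bound $|v|^{p}+|v_*|^{p}\leq 4(|v||v_*|)^{1+\delta/2}$ on the comparable region and $1+\delta/2\leq 2$ for the $G$-term). The decomposition itself, however, the paper does not reprove: it cites Mischler--Wennberg and Villani's appendix, and records the crucial quantitative facts $C(\Lambda)\leq C\Lambda$ and $c(\Lambda)\geq c\Lambda$, from which $\inf_n c'(\Lambda_n)\geq c\Lambda_1>0$ is immediate. Your attempt to reprove the decomposition has two genuine gaps, both caused by misplacing the near-singularity contribution.

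First, the polar-cap terms are \emph{not} dominated by $(|v||v_*|)^{1+\delta/2}$. After azimuthal averaging the first-order term equals $-\tfrac{1}{2}(1-\cos\theta)\,(\nabla\varphi(v)-\nabla\varphi(v_*))\cdot(v-v_*)$, which for $v_*=0$ has modulus $(2+\delta)|v|^{2+\delta}(1-\cos\theta)/2$ while $(|v||v_*|)^{1+\delta/2}=0$; likewise your second-order remainder bound $\theta^2|v-v_*|^2(|v|^\delta+|v_*|^\delta)$ is of size $\theta^2|v|^{2+\delta}$ at $v_*=0$. The sharp second-order bound requires the cross-term structure (the quantity $|v|^2|v_*|^2-(v\cdot v_*)^2$ appearing in $(s-s_0)^2$), not a crude Taylor remainder. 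Second, and consequently, the first-order azimuthal term is sign-definite negative (by convexity of $|v|^{2+\delta}$) and of size $\gtrsim (|v|^{2+\delta}+|v_*|^{2+\delta})$ off the comparable region; it must be placed into $H$, and it is precisely what yields $c(\Lambda)\geq c\Lambda$ with $\Lambda$ as in \eqref{eq:AssumptCrossSection}. Your construction instead extracts the lower bound for $H$ only from the equatorial zone $S_e$. But the Assumption does not guarantee $\int_{S_e}b\,d\sigma>0$ ($b$ may vanish away from $\theta=0$), and even when it is positive, a cutoff sequence $b_n$ concentrated near the singularity (e.g. $b\,\ind_{\set{\theta>1/n}}$ for such a $b$) gives $\int_{S_e}b_n\,d\sigma=0$ for every $n$, so your $\inf_n c'(\Lambda_n)$ would be $0$. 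This uniformity is exactly what Lemma~\ref{lem:UnifMomentBound} and Step~2 of the existence proof rely on, so the gap is not cosmetic.
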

\begin{proof}
	The definition and estimates for $ G,H $ can be found in \cite[Sect. 2]{MischlerWennberg1999HomogBE}, see also \cite[Appendix]{Villani1998NewClassWeakSol}. To derive \eqref{eq:PovznerBound} note that the term involving $ G $ can be estimated via \eqref{eq:PovznerDecomposition} and $ \delta = p-2\leq 2 $, hence $ 1+\delta/2\leq 2 $. With
	\begin{align*}
		(|v|^{2+\delta}+|v_*|^{2+\delta})\ind_{\set{|v|/2< |v_*|< 2|v|}} \leq 8(|v||v_*|)^{1+\delta/2}
	\end{align*}
	the other term is bounded by
	\begin{align*}
		-2c\Lambda\norm[p]{f} + 4c\Lambda\norm[2]{f}^2.
	\end{align*}
\end{proof}
An important ingredient in our analysis will be the moment equations satisfied by solutions to~\eqref{eq:GenHomoengBE}.
\begin{lem}\label{lem:MomentEq}
	Consider a weak solution $ (f_t)_t\subset \PrM_{p} $, $ 2<p $, to \eqref{eq:GenHomoengBE}. Then, the second moments $ M_{jk}(t):=\dualbra{v_jv_k}{f_t} $ satisfy the equations
	\begin{align}\label{eq:MomentEq}
		\dfrac{dM_t}{dt} =- AM_t-(A M_t)^\top -2\bar{b}\left( M_t-\dfrac{\trace(M_t)}{3}I\right) =: \mathcal{A}(\bar{b},A)M_t
	\end{align}
	with the constant
	\begin{align}\label{eq:MomentEqConst}
		\bar{b} = \dfrac{3\pi}{4}\int_0^\pi b(\cos\theta) \sin^3\theta d\theta.
	\end{align}
	Here, the linear operator $ \mathcal{A}(\alpha,A): \R^{3\times 3}_
	{sym}\rightarrow \R^{3\times 3}_{sym} $ acts on symmetric $ 3\times 3 $ matrices and depends on $ \alpha\in\R $, $ A\in\R^{3\times 3} $.
\end{lem}
Note that \eqref{eq:MomentEqConst} is well-defined by our assumption \eqref{eq:AssumptCrossSection} on the kernel.
\begin{proof}[Proof of Lemma \ref{lem:MomentEq}]
	We can choose $ \varphi_{jk}(v)=v_jv_k $ in the weak formulation \eqref{eq:WeakForm} and recall that $ t\mapsto\dualbra{\varphi_{jk}}{F_t} $ is continuously differentiable. A calculation using spherical coordinates with north pole $ n=(v-v_*)/|v-v_*| $ yields the constant \eqref{eq:MomentEqConst}. For more details see e.g. \cite[Prop. 4.10]{JamesVelazq2019SelfsimilarProfilesHomoenerg} or \cite[Sect. 6]{BobylevVelazq2020SelfsimilarAsymp}.
\end{proof}
Our goal now is to prove the existence of steady states of the form
\begin{align*}
	f(v,t)=e^{-3\bar{\beta}t}f_{st}\left( \dfrac{v}{e^{\bar{\beta}t}} \right).
\end{align*}
Note that we can restrict to the case of zero momentum, as it was noticed in the introduction. Hence, we look for a solution $ f_{st} $ of
\begin{align}\label{eq:StatGenBE}
	\beta \div_v (vf_{st})+\div_v(Av\cdot f_{st}) + Q(f_{st},f_{st})=0
\end{align}
with $ \beta\in\R $, $ A\in\R^{3\times 3} $. For convenience, we will sometimes write $ A_\beta=A+\beta I = A+\beta $. Again, equation \eqref{eq:StatGenBE} has to be considered in weak form. More precisely, for all $ \psi\in C^2_b $ it holds
\begin{align}\label{eq:WeakFormSteadyState}
	\dualbra{(A+\beta)v\cdot\nabla\psi}{f_{st}} = \dualbra{\psi}{Q(f_{st},f_{st})}.
\end{align}
As in the cutoff case \cite{JamesVelazq2019SelfsimilarProfilesHomoenerg}, we study the corresponding second moment equations \eqref{eq:MomentEq} with matrix $ A+\beta $ replacing $ A $.

\begin{lem}\label{lem:SteadyStateMomenEq}
	Consider the linear operator $ \mathcal{A}(\bar{b},A) $ from Lemma \ref{lem:MomentEq} with parameters $ A\in\R^{3\times 3} $ and $ \bar{b} $ in \eqref{eq:MomentEqConst}. There is a sufficiently small constant $ \varepsilon_0=\varepsilon_0(b)>0 $ such that for all $ A\in\R^{3\time3} $ with $ \norm{A}\leq \varepsilon_0 $ the following holds. The eigenvalue $ 2\bar{\beta}>0 $, $ \bar{\beta}=\bar{\beta}(A,\bar{b}) $, with largest real part is unique and simple. One can uniquely choose a corresponding eigenvector $ \bar{N}=\bar{N}(A,\bar{b})\in\R^{3\times 3}_{sym} $ which is positive definite with $ \norm{N}=1 $. Furthermore, the nonzero eigenvalues of $ \mathcal{A}(\bar{b},A)-2\bar{\beta} $ have real part less than or equal to $ -\nu $, for some $ \nu>0 $. In addition, there is $ c_0>0 $ such that $ |\bar{\beta}(A,\bar{b})|\leq c_0\varepsilon_0 $.
\end{lem}
\begin{proof}
	This is a perturbative argument noting that $ \mathcal{A}(\bar{b},A):\R^{3\times 3}_{sym}\rightarrow \R^{3\times 3}_{sym} $ depends smoothly on $ A $. For $ A=0 $ there are the eigenvalues $ \beta = 0 $ and $ 2\beta=-2\bar{b} $ with a one-dimensional subspace of eigenvectors given by $ M=KI $, $ K\in\R $, respectively, a five-dimensional subspace of eigenvectors defined by $ \set{\trace(M)=0} $. The statement now follows by continuity results for eigenvalues when $ A $ is close to $ 0 $, which defines $ \varepsilon_0 $. We choose $ 2\bar{\beta}(A,\bar{b}) $ to be the eigenvalue close to $ 2\bar{\beta}(0,\bar{b})=0 $ and let $ \bar{N}(A,\bar{b})\in \R^{3\times 3}_{sym} $ be the corresponding normalized eigenvector close to $ I $. Hence, for sufficiently small $ \varepsilon_0 $ the matrix $ \bar{N}(A,\bar{b}) $ is positive-definite. Note that (for small $ \varepsilon_0 $) the eigenspace is one-dimensional and, since the coefficients in the system are real, the corresponding eigenvalue $ 2\bar{\beta}(A,\bar{b}) $ is real. The other eigenvalues are close to $ -2\bar{b}<0 $ and thus, $ 2\bar{\beta}(A,\bar{b}) $ has the largest real part among them. In particular, there is $ \nu>0 $ such that the real part of the nonzero eigenvalues $ \mathcal{A}(\bar{b},A)-2\bar{\beta} $ are less than or equal to $ -\nu $. Finally, the eigenvalues are continuous with respect to $ A $ and thus the estimate $ |\bar{\beta}|\leq c_0\varepsilon_0 $ follows from $ \bar{\beta}(0,\bar{b})=0 $.
\end{proof}
As a final ingredient, we give a moment estimate, which is uniform in time, that will be useful for our compactness argument (see also \cite[Prop. 4.20]{JamesVelazq2019SelfsimilarProfilesHomoenerg}).
\begin{lem}\label{lem:UnifMomentBound}
	Let $ (f_t)\in C([0,\infty);\PrM_{p}) $ be the unique weak solution to \eqref{eq:GenHomoengBE} with matrix $ A $ replaced by $ A+\bar{\beta} $. Assume that $ \norm{A}\leq \varepsilon_0 $ with $ \varepsilon_0>0 $ from Lemma \ref{lem:SteadyStateMomenEq} and that the initial condition $ f_0\in\PrM_{p} $, $ 2<p\leq 4 $, satisfies
	\begin{align}\label{eq:MomentsInital}
		\int_{\R^3}vf_0(dv)=0, \quad \int_{\R^3}v_iv_jf_0(dv)=K\bar{N}^n_{ij},
	\end{align}
	with $ K\geq0 $. Then, we have for all $ t\geq 0 $
	\begin{align}\label{eq:MomentsLater}
		\int_{\R^3}vf_t(dv)=0, \quad \int_{\R^3}v_iv_jf_t(dv)=K\bar{N}_{ij}.
	\end{align}
	Furthermore, by decreasing $ \varepsilon_0=\varepsilon_0(b,p)>0 $, if necessary, there is $ C_*=C_*(K) $ such that
	\begin{align*}
		\norm[p]{f_0}\leq C_* \implies \norm[p]{f_t}\leq C_*
	\end{align*}
	for all $ t\geq0 $.
\end{lem}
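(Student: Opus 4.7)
The proof splits naturally into two parts, mirroring the two assertions of the lemma.

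For \eqref{eq:MomentsLater}, conservation of mass and momentum is immediate from Lemma \ref{lem:MomentEq} applied with the matrix $A+\bar{\beta}_n$ and the cutoff kernel $b_n$. For the second moments $M_t := (\dualbra{v_iv_j}{F_t^n})_{ij}$, the same lemma (now with $\beta = \bar{\beta}_n$ and $\alpha = \bar{b}_n$ in \eqref{eq:MomentEq2}) furnishes the linear ODE
$$\frac{dM_t}{dt} = -2\bar{\beta}_n M_t + \mathcal{A}(\bar{b}_n, A) M_t.$$
By the very construction in Lemma \ref{lem:SteadyStateMomenEq}, $\bar{N}^n$ is an eigenvector of $\mathcal{A}(\bar{b}_n, A)$ with eigenvalue $2\bar{\beta}_n$, so the right-hand side vanishes on the constant curve $M_t \equiv K\bar{N}^n$. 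Uniqueness for linear ODEs then yields \eqref{eq:MomentsLater}.

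For the uniform $p$-moment bound I would test the weak cutoff equation with $\varphi(v) = |v|^p$ (after a standard regularization, as in Step 2 of the existence proof of Proposition \ref{pro:IntExistUniq}, since for $p>2$ the second derivatives of $|v|^p$ grow). The drift term contributes
$$\dualbra{(A+\bar{\beta}_n) v \cdot \nabla |v|^p}{F_t^n} \leq p\bigl(\norm{A} + |\bar{\beta}_n|\bigr) \norm[p]{F_t^n},$$
while Lemma \ref{lem:Povzner} with $p = 2+\delta$ controls the collision contribution by $C'(\Lambda_n) \norm[2]{F_t^n}^2 - c'(\Lambda_n)\norm[p]{F_t^n}$. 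The uniform-in-$n$ part of that lemma gives $c' := \inf_n c'(\Lambda_n) > 0$ and $\sup_n C'(\Lambda_n) < \infty$. Since the second moments are already pinned down to $K\bar{N}^n$ with $\bar{N}^n$ normalized in Frobenius norm, $\norm[2]{F_t^n}^2$ is bounded by a constant multiple of $K^2$ uniformly in $n$ and $t$.

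Combining these bounds yields the differential inequality
$$\frac{d}{dt}\norm[p]{F_t^n} \leq \bigl(p(\norm{A}+|\bar{\beta}_n|) - c'\bigr)\norm[p]{F_t^n} + \widetilde{C}(K),$$
with $\widetilde{C}(K)$ independent of $n$. The decisive use of the smallness of $\varepsilon_0$ enters here: by continuity of $\bar{\beta}(A,\alpha)$ at $(0,\bar{b})$ together with $\norm{A}\leq \varepsilon_0$ and $|\bar{b}_n - \bar{b}|\leq \varepsilon_0$, one controls $|\bar{\beta}_n|$ by a constant times $\varepsilon_0$ uniformly in $n$. Decreasing $\varepsilon_0 = \varepsilon_0(b, p)$ so that $p(\norm{A} + |\bar{\beta}_n|) \leq c'/2$, the coefficient becomes strictly negative, and a standard barrier argument shows that any $C_*(K)\geq 2\widetilde{C}(K)/c'$ renders the region $\{\norm[p]{\cdot}\leq C_*(K)\}$ invariant for the flow, uniformly in $n$.

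The only real obstacle is technical, namely the justification of $|v|^p$ as a test function in the non-cutoff weak formulation for $p\in(2,4]$; this is handled by the same mollification-and-limit device used in the existence proof, relying on the a priori $p$-moment control itself to pass to the limit. Conceptually the argument is a clean bootstrap: the closed ODE for $M_t$ fixes the second moments, after which the Povzner dissipation on the $p$-moment dominates the drift once $A$ and $\bar{\beta}_n$ are small.
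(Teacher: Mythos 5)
Your proposal is correct and follows essentially the same route as the paper: conservation of mass/momentum and the stationarity of $K\bar{N}^n$ for the second-moment ODE give \eqref{eq:MomentsLater}, and the $p$-moment bound comes from testing with $|v|^p$, applying the Povzner estimate of Lemma \ref{lem:Povzner} with its uniform-in-$n$ constants, absorbing the drift via $|\bar{\beta}_n|\leq c_0\varepsilon_0$, and closing with a Gronwall-type (equivalently, barrier/invariance) argument after shrinking $\varepsilon_0$ so the dissipative coefficient dominates.
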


\begin{proof}
	As was mentioned already in the introduction, the first moment remains zero for all times. The statement about the second moments follows from Lemma \ref{lem:MomentEq} and the fact that $ \bar{N} $ are stationary solutions to the corresponding moment equations \eqref{eq:MomentEq}. For the final statement, we use the Povzner estimate from Lemma \ref{lem:Povzner}. Using the test function $ \psi(v)=|v|^p $ in the weak formulation for $ f $ and \eqref{eq:PovznerBound} we deduce
	\begin{align*}
		\dfrac{d}{dt}\norm[p]{f_t}&=\dfrac{d}{dt}\dualbra{|v|^p}{f_t}\leq p\norm{A+\bar{\beta}}\norm[p]{f_r} + C'\Lambda \norm[2]{f_r}^2 - c'\Lambda\norm[p]{f_r}
		\\
		&\leq \left[ p\varepsilon_0(1+c_0)-c'\Lambda\right] \norm[p]{f_r} + C'\Lambda K^2.
	\end{align*}
	For $ \varepsilon_0 $ sufficiently small we have $ \delta:=c'\Lambda-p\varepsilon_0(1+c_0)>0 $ and hence from a Gronwall type argument, in conjunction with $ \norm[p]{f_0}\leq C_* $,
	\begin{align*}
		\norm[p]{f_t} \leq C_*e^{-\delta t} + \dfrac{C'\Lambda K^2}{\delta} =C_* + \left( 1-e^{-\delta t}\right) \left( \dfrac{C'\Lambda K^2}{\delta}-C_*\right)
	\end{align*}
	We conclude by choosing $ C_*=C_*(K) $ sufficiently large.
\end{proof}
We are now able to prove our main existence result.
\begin{proof}[Proof of Theorem \ref{thm:MainTheorem}. (i)]
	We use similar arguments as in \cite[Sect. 4.3]{JamesVelazq2019SelfsimilarProfilesHomoenerg}. Let us define the set $ \mathscr{U}\subset\PrM_{p} $, $ 2<p\leq 4 $, consisting of measures $ f\in\PrM_{p} $ with
	\begin{align*}
		\int_{\R^3}vf(dv)=0, \quad \int_{\R^3}v_iv_jf(dv)=K\bar{N}_{ij}, \quad \norm[p]{f}\leq C_*.
	\end{align*}
	Here, $ \bar{N} $ is given in Lemma \ref{lem:SteadyStateMomenEq} and we assume that $ \norm{A}\leq \varepsilon_{0} $ as in Lemmas \ref{lem:SteadyStateMomenEq}, \ref{lem:UnifMomentBound}. Note that $ \mathscr{U} $ is a convex, compact subset of the space $ \mathscr{M}_{f}(\R^3) $ of signed Radon measures on $ \R^3 $ with finite total variation, equipped with the weak-$ * $ topology. With this topology $ \mathscr{M}_{f}(\R^3) $ is a locally convex space. Let us note that weak convergence within $ \mathscr{U} $ implies convergence w.r.t. the metric $ d_2 $ by Lemma~\ref{lem:TopologyMetric}.
	
	Let us define the nonlinear semigroup $ \mathscr{S}_t:\PrM_{p}\rightarrow \PrM_{p} $ mapping any $ f_0 $ to $ f_t $, where $ (f_t)_t $ is the unique solution to the equation \eqref{eq:GenHomoengBE} with matrix $ A+\bar{\beta} $ replacing $ A $ and initial condition $ f_0 $. By Lemma \ref{lem:UnifMomentBound} we have $ \mathscr{S}_t:\mathscr{U}\rightarrow \mathscr{U} $. Furthermore, $ f\mapsto \mathscr{S}_tf $ is continuous on $ \mathscr{U} $ for each $ t\geq0 $, as follows from \eqref{eq:UniquenssMetric}.
	
	We can now apply Schauder's fixed point theorem to the continuous self-mappings $ \mathscr{S}_{1/n}:\mathscr{U}\rightarrow \mathscr{U} $, which yields a fixed point $ f^n_{st} $. By compactness of $ \mathscr{U} $ we have for a subsequence $ f^{n_k}_{st}\to f_{st} $ as $ k\to \infty $. As a consequence of the semigroup property, it holds $ \mathscr{S}_{m/n_k}f^{n_k}_{st}=f^{n_k}_{st} $ for any $ k,m\in\N $. 
	
	Let now $ t\geq0 $ be arbitrary. We can find a sequence of integers $ m_k\in\N $ with $ m_k/n_k\to t $ as $ k\to \infty $ and write
	\begin{align*}
		f_{st}= \lim_{k\to\infty} f^{n_k}_{st} = \lim_{k\to\infty} \mathscr{S}_{m_k/n_k}f^{n_k}_{st} = \mathscr{S}_{t}f_{st}.
	\end{align*}
	To verify the last equality, we use \eqref{eq:UniquenssMetric} and estimate
	\begin{align}\label{eq:ProofFixedPointEstimate}
		\begin{split}
			d_2\left( \mathscr{S}_{m_k/n_k}f^{n_k}_{st},\mathscr{S}_{t}f_{st} \right) &\leq d_2\left( \mathscr{S}_{m_k/n_k}f^{n_k}_{st},\mathscr{S}_{m_k/n_k}f_{st}  \right) + d_2\left( 	\mathscr{S}_{m_k/n_k}f_{st},\mathscr{S}_{t}f_{st} \right) 
			\\
			&\leq e^{2(t+1)\norm{A+\bar{\beta}}} d_2\left( f^{n_k}_{st},f_{st} \right) + d_2\left( \mathscr{S}_{m_k/n_k}f_{st},\mathscr{S}_{t}f_{st} \right).
		\end{split}
	\end{align}
	The first term goes to zero, since $ f^{n_k}_{st}\to f_{st} $ in $ \mathscr{U} $. For the last term, recall from Proposition \ref{pro:IntExistUniqReg} (i) that $ t\mapsto\dualbra{\psi}{f_t} $ is continuous for all test functions $ \psi\in C^2 $ with $ \norm[\infty]{D^2\psi}<\infty $, where $ f_t:=\mathscr{S}_{t}f_{st} $. By an approximation this holds for all continuous functions with at most quadratic growth. Hence, we can conclude, by using again Lemma \ref{lem:TopologyMetric}, that the last term in \eqref{eq:ProofFixedPointEstimate} goes to zero.
	
	In total $ f_{st} $ is a stationary solution, i.e. it satisfies \eqref{eq:StatGenBE} with $ \beta $ replaced by $ \bar{\beta} $. We thus constructed the self-similar solution
	\begin{align*}
		f(v,t)=e^{-3\bar{\beta}t}f_{st}\left( v e^{-\bar{\beta}t}\right)
	\end{align*} 
	of \eqref{eq:GenHomoengBE}, which has zero mean and is smooth by Proposition \ref{pro:IntExistUniqReg} (iii). To obtain mean $ U\in\R^3 $ we use the change of variables $ v\mapsto v-e^{-tA }U $, as mentioned in the introduction. Finally, one can see that the Dirac measure $ f_{st}=\delta_0 $ is a solution to \eqref{eq:WeakFormSteadyState}, yielding a self-similar profile with $ K=0 $. This concludes the existence proof.
\end{proof}

\subsection{Uniqueness and stability of self-similar solutions}
For the proof of part (ii) in Theorem \ref{thm:MainTheorem}, we will again use the Fourier transform $ \varphi_t(k)=\F[f_t](k) $ for a solution $ (f_t)_t\subset\PrM_p $, $ p>2 $. A key ingredient will be a comparison principle between two solutions. This is based on the $ \Lin $-Lipschitzianity property of the gain term $ \hat{Q}^+_n $ from Lemma \ref{lem:PosCutoffEstimateP}. Recall that $ \hat{Q}^+_n $ is the gain term in Fourier space corresponding to a cutoff kernel $ b_n\nearrow b $, cf. \eqref{eq:FTGainLossTerm}. More precisely, we first consider the linearization of the cutoff equation given by
\begin{align}\label{eq:CutoffLinPDE}
	\partial_t\varphi + A^\top k\cdot\nabla\varphi= (\Lin_n-S_nI)(\varphi)(k), \quad \varphi(0,\cdot)=\varphi_0(\cdot).
\end{align}
Recall that $ \Lin_n $, $ S_n $ are defined in \eqref{eq:Linearization} and \eqref{eq:IntegralConstant}, respectively. One can prove that \eqref{eq:CutoffLinPDE} defines a semigroup $ \mathcal{P}^n_t:C_p\rightarrow C_p $, where
\begin{align*}
	C_p(\R^3):=\setdef{\varphi\in C(\R^3)}{\norm[C_p]{\varphi}:=\sup_k |\varphi(k)|/(1+|k|^p)<\infty}
\end{align*}
for all $ p\geq 2 $. This follows essentially from the fact that $ \Lin_n:C_p\rightarrow C_p $ is bounded, see \cite{BobylevVelazq2020SelfsimilarAsymp}.

In the following lemma, we show that the difference of two solutions to the cutoff problem \eqref{eq:CutoffGenHomoengBE} can be be estimated using the linearized problem \eqref{eq:FTLinearEquation}.
\begin{lem}\label{lem:ComparisionCutoff}
	Consider two solutions $ \varphi, \, \psi\in C([0,\infty);\F_p) $ to \eqref{eq:FTgenBE} with cutoff collision kernel $ b_n $. Assume that $ |\varphi_0-\psi_0|(k)\leq u_0(k) $, $ u_0\in C_p $. Then, we have for all $ k\in\R^3 $, $ t\geq0 $
	\begin{align*}
		|\varphi_t(k)-\psi_t(k)|\leq \mathcal{P}^n_t [u_0]  (k).
	\end{align*}
\end{lem}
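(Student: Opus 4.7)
The plan is to reduce the problem to a monotone integral inequality via Duhamel's formula. Writing the linear drift-loss generator as $L_0 := A^\top k\cdot\nabla + S_n I$, its semigroup acts by $e^{-tL_0}u(k)=e^{-S_n t}u(e^{-tA^\top}k)$. Both the nonlinear cutoff equation (isolating the loss term $\hat{Q}_n^-(\varphi,\varphi)(k)=S_n\varphi(k)$) and the linear comparison equation can then be written in mild form:
\begin{align*}
\varphi_t(k) &= e^{-S_n t}\varphi_0(e^{-tA^\top}k) + \int_0^t e^{-S_n(t-s)}\hat{Q}_n^+(\varphi_s,\varphi_s)(e^{-(t-s)A^\top}k)\, ds,\\
v_t(k) &= e^{-S_n t} u_0(e^{-tA^\top}k) + \int_0^t e^{-S_n(t-s)}\Lin_n(v_s)(e^{-(t-s)A^\top}k)\, ds,
\end{align*}
where $v_t := \mathcal{P}_t^n[u_0]$, and similarly for $\psi_t$.

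Subtracting the two representations of $\varphi_t,\psi_t$ and using the pointwise bound
\[
|\hat{Q}_n^+(\varphi_s,\varphi_s)-\hat{Q}_n^+(\psi_s,\psi_s)|(k)\leq \Lin_n(|\varphi_s-\psi_s|)(k)
\]
from Lemma \ref{lem:PosCutoffEstimateP}, I obtain with $\chi_t:=\varphi_t-\psi_t$ the inequality
\[
|\chi_t(k)|\leq e^{-S_n t}|\chi_0|(e^{-tA^\top}k) + \int_0^t e^{-S_n(t-s)}\Lin_n(|\chi_s|)(e^{-(t-s)A^\top}k)\, ds.
\]
Setting $R_t(k):=v_t(k)-|\chi_t(k)|$ and subtracting this from the Duhamel identity for $v_t$, using the assumption $u_0\geq|\chi_0|$ together with the positivity of $\Lin_n$ (whose kernel $b_n\geq 0$), I get
\[
R_t(k)\geq \int_0^t e^{-S_n(t-s)}\Lin_n(R_s)(e^{-(t-s)A^\top}k)\, ds.
\]

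To conclude $R_t\geq 0$, I write $R_s = R_s^+ - R_s^-$ with $R_s^-=\max(0,-R_s)$. Since $\Lin_n$ is a positive operator, $\Lin_n(R_s)\geq -\Lin_n(R_s^-)$, and taking the negative part of the above inequality yields
\[
R_t^-(k)\leq \int_0^t e^{-S_n(t-s)}\Lin_n(R_s^-)(e^{-(t-s)A^\top}k)\, ds.
\]
Now I use the crude pointwise bound $\Lin_n(f)(k)\leq 2S_n\|f\|_\infty$ (since $\int_{S^2}b_n(\hat k\cdot\sigma)\,d\sigma = S_n$), which gives
\[
\|R_t^-\|_\infty \leq 2 S_n \int_0^t \|R_s^-\|_\infty\, ds.
\]
The function $s\mapsto\|R_s^-\|_\infty$ is bounded on any compact time interval by $\|R_s^-\|_\infty\leq |\chi_s|\leq 2$ (since $v_s\geq 0$ by positivity of $\mathcal{P}_s^n$, which itself follows from iterating the Duhamel series for $v_s$). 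Gronwall's lemma then forces $R_t^-\equiv 0$, hence $|\chi_t|\leq v_t$.

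The main subtlety is the need to work in absolute values, so the inequality for $R_t$ is not a differential equation but an integral inequality with the positive operator $\Lin_n$; the ``comparison principle'' character must be extracted from monotonicity/positivity rather than a maximum principle on smooth objects. A minor technical point is ensuring that $v_t = \mathcal{P}_t^n[u_0]$ is nonnegative; this follows because $e^{-tL_0}$ preserves positivity and $\Lin_n$ has nonnegative kernel, so the Duhamel iteration produces a nonnegative series converging to $v_t$.
\qed
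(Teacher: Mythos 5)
Your proof is correct and follows essentially the same route as the paper: Duhamel/mild formulation, the $\Lin$-Lipschitz bound from Lemma \ref{lem:PosCutoffEstimateP}, and a comparison principle for the resulting linear integral inequality. The only difference is that you actually prove that comparison principle (via the negative-part decomposition, the uniform bound $R_s^-\leq|\chi_s|\leq 2$, and Gronwall), whereas the paper invokes it and defers the details to the cited reference.
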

\begin{proof} We sketch the arguments. A detailed proof can be found in \cite[Sect. 5]{BobylevVelazq2020SelfsimilarAsymp}.
	
	\textit{Step 1.} Since the semigroup $ \exp(-(S_n+A^\top k\cdot\nabla)t) $ is monotonicity preserving and $ \Lin_n $ is positivity preserving, it follows: if $ v_0,u_0\in C_p $ with $ 0\leq v_0\leq u_0 $, then 
	\begin{align*}
		0\leq \mathcal{P}^n_tv_0(\cdot)\leq \mathcal{P}^n_tu_0(\cdot).
	\end{align*}
	Note that
	\begin{align*}
		\exp(-(S_n+A^\top k\cdot\nabla)t) \varphi (k) = e^{-S_nt}\varphi(e^{-A^\top t}k).
	\end{align*}
	
	\textit{Step 2.} Let $ \varphi, \, \psi $ be two solutions to \eqref{eq:FTgenBE} then we can write in mild form
	\begin{align*}
		\varphi_t(k)-\psi_t(k) &= \varphi_0(k)-\psi_0(k) 
		\\
		&+\int_0^t e^{-(S_n+A^\top k\cdot\nabla)(t-r)}\left[ \hat{Q}^+_n(\varphi_r,\varphi_r)-\hat{Q}^+_n(\psi_r,\psi_r) \right](k)dr. 
	\end{align*}
	Set $ v_t(k):=\varphi_t(k)-\psi_t(k) $ and estimate using the $ \Lin $-Lipschitz property in Lemma \ref{lem:PosCutoffEstimateP}
	\begin{align*}
		|v_t(k)|\leq |v_0(k)|+\int_0^t e^{-(S_n+A^\top k \cdot \nabla)(t-r)} \Lin_n(|v_r|)(k)dr.
	\end{align*}
	A comparison principle for the linear equation implies $ |v_t(k)|\leq \mathcal{P}^n_t[|v_0|] (k) $. Finally, using the monotonicity preserving property of $ \mathcal{P}^n_t $ in Step 1 we conclude $ \mathcal{P}^n_t[|v_0|] (k) \leq \mathcal{P}^n_t[u_0] (k) $.
\end{proof}
As already indicated in Remark \ref{rem:LinearOp}, the linear semigroup $ \mathcal{P}^n_t $ is not well-defined for arbitrary functions $ u_0 $ as $ n\to\infty $. However, we are only interested in functions bounded by terms of the form $ |k|^p $ for $ p\geq2 $. Let us hence define
\begin{align*}
	u_{n,p}(k,t):=|k|^p\exp(-(\lambda_n(p)-p\norm{A})t),
\end{align*}
where $ \lambda_n(p) $ is given in \eqref{eq:EigenvalLinOp}. We can now prove the crucial comparison principle for the non-cutoff equation.
\begin{pro}\label{pro:ComparisionNonCutoff}
	Consider two weak solutions $ (f_t), \, (g_t)\subset\PrM_{p} $, $ p>2 $, to \eqref{eq:GenHomoengBE} with zero momentum. Let $ \varphi, \, \psi\in C([0,\infty);\F_p) $ be the corresponding Fourier transforms. Suppose that
	\begin{align*}
		|\varphi_0(k)-\psi_0(k)|\leq C_1|k|^p+C_2|k|^2, \quad \forall k\in\R^3.
	\end{align*}
	Then, we have for all $ t\geq0 $ and $ k\in\R^3 $
	\begin{align*}
		|\varphi_t(k)-\psi_t(k)|\leq C_1e^{-(\lambda(p)-p\norm{A})t}|k|^p+C_2e^{2\norm{A}t}|k|^2.
	\end{align*}
\end{pro}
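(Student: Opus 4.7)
The plan is to reduce the non-cutoff comparison principle to the cutoff one via approximation, then estimate the resulting linear semigroup $\mathcal{P}^n_t$ pointwise by producing an explicit super-solution built from $|k|^p$ and $|k|^2$. First, I would introduce cutoff approximations: let $(F^n_t)_t, (G^n_t)_t$ be the unique cutoff solutions of \eqref{eq:genBE2} with kernel $b_n$ and initial data $F_0, G_0$. By Proposition~\ref{pro:IntExistUniq} (uniqueness) together with Step 3 of its existence proof (weak compactness and identification of the limit), $F^n_t \rightharpoonup F_t$ and $G^n_t \rightharpoonup G_t$ weakly, so their characteristic functions converge pointwise, $\varphi^n_t(k)\to\varphi_t(k)$ and $\psi^n_t(k)\to\psi_t(k)$ for every $k, t$.

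Second, I would apply Lemma~\ref{lem:ComparisionCutoff} to $\varphi^n,\psi^n$ with the dominating initial datum $u_0(k):=C_1|k|^p+C_2|k|^2$, giving
\[
|\varphi^n_t(k)-\psi^n_t(k)| \leq \mathcal{P}^n_t[u_0](k).
\]

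The key step is to bound $\mathcal{P}^n_t[u_0]$ explicitly. I would introduce the candidate
\[
U^n(t,k) := C_1 e^{(p\|A\|-\lambda_n(p))t}|k|^p + C_2 e^{2\|A\|t}|k|^2,
\]
so that $U^n(0,k)=u_0(k)$. Using Remark~\ref{rem:EigenvalLinOp}, namely $(\Lin_n-S_nI)|k|^q=-\lambda_n(q)|k|^q$ with $\lambda_n(2)=0$, together with the pointwise bound $|\langle A^\top k,k\rangle|\leq \|A\||k|^2$ (which follows from the entrywise matrix norm), a direct computation gives for each of the two summands
\[
\bigl(\partial_t + A^\top k\cdot\nabla - \Lin_n + S_n I\bigr)U^n(t,k) \geq 0.
\]
The standard maximum-principle argument for the positivity/monotonicity preserving semigroup $\mathcal{P}^n_t$ (set $w=U^n-\mathcal{P}^n_t[u_0]$ and iterate the Duhamel formula, exactly as in Step~1 of the proof of Lemma~\ref{lem:ComparisionCutoff}) then yields $\mathcal{P}^n_t[u_0](k)\leq U^n(t,k)$.

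Finally, combining the two inequalities and letting $n\to\infty$, the left-hand side converges pointwise to $|\varphi_t(k)-\psi_t(k)|$, while the right-hand side converges to the stated bound by Lemma~\ref{lem:EigenfuncEigenvalLinOp}(ii) ($\lambda_n(p)\to\lambda(p)$). The main obstacle is the convergence $\varphi^n_t\to\varphi_t$: without Proposition~\ref{pro:IntExistUniq}, the cutoff approximation might converge only along subsequences, so uniqueness of the non-cutoff weak solution is what really makes the scheme work. The secondary subtlety is that the super-solution check has to swallow the transport term $A^\top k\cdot\nabla$, which is precisely the source of the $p\|A\|$ correction in the exponent, handled by the pointwise estimate $|\langle A^\top k,k\rangle|\leq \|A\||k|^2$.
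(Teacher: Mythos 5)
Your proposal is correct and follows essentially the same route as the paper: cutoff approximation plus uniqueness to get pointwise convergence of the characteristic functions, Lemma~\ref{lem:ComparisionCutoff} with the dominating datum $C_1|k|^p+C_2|k|^2$, and the explicit super-solution $C_1e^{(p\norm{A}-\lambda_n(p))t}|k|^p+C_2e^{2\norm{A}t}|k|^2$ for $\mathcal{P}^n_t$, followed by $\lambda_n(p)\to\lambda(p)$. The paper phrases the super-solution verification in integral (Duhamel) form rather than your differential inequality, but this is the same argument.
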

\begin{proof}
	First of all, let us note that it is easy to prove that
	\begin{align*}
		u_{n,p}(k,t)\geq e^{-(S_n+A^\top k\cdot\nabla) t}|k|^p+\int_{0}^te^{-(S_n+A^\top k\cdot\nabla) (t-r)} \Lin_n(u_{n,p}(\cdot,r))dr.
	\end{align*}
	and hence, by a comparison principle for the linear equation, it follows
	\begin{align}\label{eq:ProofComparison}
		\mathcal{P}^n_t[|\cdot|^p](k) \leq u_{n,p}(t,k).
	\end{align}
	
	Now, we approximate $ \varphi,\, \psi $ by solutions $ \varphi^n,\, \psi^n\in C([0,\infty);\F_p) $ to equation \eqref{eq:FTgenBE} with cutoff kernel $ 0\leq b_n $ and initial datum $ \varphi_0 $ resp. $ \psi_0 $. We use an approximation as in the existence proof of \eqref{eq:GenHomoengBE}. For these we can apply Lemma \ref{lem:ComparisionCutoff} and \eqref{eq:ProofComparison}
	\begin{align}\label{eq:ComparisionCutoff}
		|\varphi_t^n(k)-\psi_t^n(k)|\leq  \mathcal{P}^n_t \big[C_1|\cdot|^p+C_2|\cdot|^2 \big]  (k) \leq C_1u_{n,p}(t,k)+C_2u_{n,2}(t,k).
	\end{align}
	Since weak convergence of measures implies pointwise convergence of their characteristic functions we can pass to the limit in \eqref{eq:ComparisionCutoff}. Recalling $ \lambda_n(p)\to\lambda(p) $ for $ p\geq 2 $, see Lemma \ref{lem:EigenfuncEigenvalLinOp}, concludes the proof.
\end{proof}
Finally, we prove the stability of the self-similar solutions or equivalently the steady states of \eqref{eq:StatGenBE}. Let us denote by $ \Psi=\F[f_{st}] $ the characteristic function of a steady state $ f_{st}\in\PrM_p $, $ 4\geq p>2 $ with second moments $ \bar{N} $, as in the first part of Theorem \ref{thm:MainTheorem}. Recall that $ \norm{A}\leq \varepsilon_0 $ is sufficiently small and $ 2\bar{\beta}=2\bar{\beta}(A) $ is the simple eigenvalue to the moment equations with eigenvector $ \bar{N}=\bar{N}(A) ,$ as in Lemma \ref{lem:SteadyStateMomenEq}.

To get an idea, recall the comparison principle in Proposition \ref{pro:ComparisionNonCutoff} and assume $ p\norm{A+\bar{\beta}}-\lambda(p)<0 $ for $ \norm{A+\bar{\beta}} $ sufficiently small. This would imply exponential convergence to the steady state, \emph{if} we knew that initially (or at some time) respective moments of the initial condition and $ f_{st} $ are equal, because then the quadratic term $ C_2|k|^2 $ would vanish. But this means in particular that the second moments are given by $ \bar{N} $ (and hence for all times). We cannot expect this, unless the solution equals the self-similar solutions. However, the overall idea is to show that the second moments converge to $ \alpha^2\bar{N} $, for some $ \alpha\geq0 $, exponentially in time. Applying again the comparison principle Proposition \ref{pro:ComparisionNonCutoff} yields the claim. A similar argument was also used in \cite{BobylevVelazq2020SelfsimilarAsymp} in the cutoff case.

\begin{proof}[Proof of Theorem \ref{thm:MainTheorem} (ii)] 
	Let $ (f_t)_t\subset \PrM_{p} $, $ 2<p\leq 4 $, be a solution to \eqref{eq:GenHomoengBE} with matrix $ A\in\R^{3\times3} $, $ \norm{A}\leq \varepsilon_{0} $. Here, $ \varepsilon_{0}>0 $ will be chosen to be small enough, in particular, such that part (i) of Theorem \ref{thm:MainTheorem} holds. Choosing $ (\tilde{f}_t)_t $ as stated in Theorem \ref{thm:MainTheorem} (ii) gives a solution to \eqref{eq:ApplicationGenBE} with matrix $ A+\bar{\beta} $ and zero momentum. Let us denote the characteristic functions of $ (\tilde{f}_t)_t $ by $ (\varphi_t)_t $ and the second moments by $ (M_t)_t $.
	
	\textit{Step 1.} We know that $ (M_t)_t $ satisfies the equation $ M_t' = (\mathcal{A}(\bar{b},A)-2\bar{\beta})M_t $. Furthermore, by Lemma \ref{lem:SteadyStateMomenEq} the nonzero eigenvalues of $ \mathcal{A}(\bar{b},A)-2\bar{\beta} $ have real part less than or equal to $ -\nu<0 $. The steady states are given by the span of $ \bar{N} $. Thus, there is $ C=C(M_0)\geq0 $ and $ \alpha=\alpha(M_0)\geq0 $ such that
	\begin{align*}
		\norm{M_t-\alpha^2\bar{N}}\leq Ce^{-\nu t}.
	\end{align*}
	
	\textit{Step 2.} Since $ \tilde{f}_t\in \PrM_{p} $, its characteristic function is in $ C^{2,p-2}_b $. W.l.o.g. we can assume $ p\leq 3 $. Now, we prove that $ \norm[C^{2,p-2}]{\varphi_t} $ is uniformly bounded in $ t\geq0 $. To this end we bound $ \sup_t \norm[p]{\tilde{f}_t} $. We use the Povzner estimates as in Lemma \ref{lem:UnifMomentBound} to get
	\begin{align*}
		\dfrac{d}{dt}\norm[p]{\tilde{f}_t}&\leq p\norm{A+\bar{\beta}}\norm[p]{\tilde{f}_r} +C'\Lambda \norm[2]{\tilde{f}_r}^2 - c'\Lambda\norm[p]{\tilde{f}_r}
		\\
		&\leq \left[ p\varepsilon_0(1+c_0)-c'\Lambda\right] \norm[p]{\tilde{f}_r} + C'\Lambda K^2.
	\end{align*}
	Here, $ \sup_t \norm{M_t}\leq K $ by Step 1 and $ c_0>0 $ is from Lemma \ref{lem:SteadyStateMomenEq}. For $ \varepsilon_0=\varepsilon_0(p,b) $ sufficiently small we have $ \delta:=c'\Lambda-p\varepsilon_0(1+c_0)>0 $ and hence, from a Gronwall type argument, the desired uniform bound
	\begin{align*}
		\norm[p]{\tilde{f}_t} \leq \norm[p]{\tilde{f}_0}e^{-\delta t} + \dfrac{C'\Lambda K^2}{\delta} =:C_*(K,\varepsilon_0).
	\end{align*} 
	
	\textit{Step 3.} We recall that $ \Psi=\F[f_{st}] $, where $ f_{st} $ is the steady state with second moments $ \bar{N} $. Note that $ \Psi(\alpha\cdot) $ is the characteristic function of the steady state $ \alpha^{-3}f_{st}(v/\alpha) $ with second moments $ \alpha^2\bar{N} $. We estimate the characteristic functions
	\begin{align*}
		|\varphi_t(k)-\Psi(\alpha k)| &\leq \left| \varphi_t(k)-1+\dfrac{1}{2}M_t:k\otimes k \right| 
		\\
		&+\dfrac{1}{2}\norm{M_t-\alpha^2\bar{N}}|k|^2+\left|1-\dfrac{1}{2}\alpha^2 \bar{N}:k\otimes k - \Psi(\alpha k) \right|.
	\end{align*}
	For the first term we use a Taylor expansion, in conjunction with the fact that $ D^2\varphi_t $ is $ (p-2) $-Hölder continuous with $ \norm[C^{p-2}]{D^2\varphi_t}\leq C_* $. We obtain the estimate
	\begin{align*}
		\left| \varphi_t(k)-1+\dfrac{1}{2}M_t:k\otimes k \right| \leq C_*|k|^p.
	\end{align*}
	The last term is treated similarly due to $ \Psi\in\F_p $ and for the second term we apply Step 1. All together this yields
	\begin{align*}
		|\varphi_t(k)-\Psi(\alpha k)|\leq C|k|^{p}+ Ce^{-\nu t}|k|^2.
	\end{align*}
	Now, we apply the comparison principle in Proposition \ref{pro:ComparisionNonCutoff} starting at time $ T $ to obtain
	\begin{align*}
		|\varphi_{T+t}(k)-\Psi(\alpha k)|\leq Ce^{-(\lambda(p)-p\norm{\bar{\beta}+A})t}|k|^p+Ce^{-\nu T+2\norm{\bar{\beta}+A}t}|k|^2.
	\end{align*}
	We recall that for all previous arguments we had to choose $ \norm{A}\leq \varepsilon_0  $ sufficiently small. We now further assume that $ \varepsilon_0>0 $ is small enough to ensure
	\begin{align*}
		\norm{\bar{\beta}+A} \leq (1+c_0)\norm{A} \leq \min\left( \dfrac{\lambda(p)}{2p},\dfrac{\nu}{4} \right).
	\end{align*}
	Thus, we get for $ t=T $ and $ \theta'=\min(\frac{\lambda(p)}{4},\frac{\nu}{4}) $
	\begin{align}\label{eq:ExpConvergenceEstimate}
		|\varphi_{2T}(k)-\Psi(\alpha k)|\leq Ce^{-2\theta' T}\left( |k|^p+|k|^2\right),
	\end{align}
	where $ C=C(\varphi_0,p) $. Now, we apply the following inequality valid for all $ \varphi,\psi\in\F_p $
	\begin{align}\label{eq:Interpolation}
		d_2(\varphi,\psi)\leq c_p (\gamma+\gamma ^{2/p}), \quad \gamma:= \sup_k \dfrac{|\varphi-\psi|(k)}{|k|^2+|k|^{p}}.
	\end{align}
	This can be proved by splitting the supremum in $ d_2(\varphi,\psi) $ into $ |k|\leq R $ and $ |k|\geq R $ and minimizing over $ R $. Combining both \eqref{eq:ExpConvergenceEstimate} and \eqref{eq:Interpolation} yields 
	\begin{align*}
		d_2(\varphi_t,\Psi(\alpha\cdot)) \leq Ce^{-\theta t},
	\end{align*}
	for some $ \theta>0 $.
\end{proof}

\subsection{Finiteness of higher moments}
To prove part (iii) of Theorem \ref{thm:MainTheorem}, we need an extension of Lemma \ref{lem:UnifMomentBound}.
\begin{lem}\label{lem:UnifMomentBoundM}
	Let $ M\in\N $, $ M\geq3 $ and $ p\geq M $. Consider the unique solution $ (f_t)\in C([0,\infty); \PrM_{p}) $ to \eqref{eq:GenHomoengBE} with $ A $ replaced by $ A+\bar{\beta} $. Let $ \norm{A}\leq \varepsilon_0 $ and $ \varepsilon_0>0 $ from Lemma \ref{lem:SteadyStateMomenEq}. Assume that the initial condition $ f_0\in\PrM_{p} $ satisfies
	\begin{align}
		\int_{\R^3}vf_0(dv)=0, \quad \int_{\R^3}v_iv_jf_0(dv)=K\bar{N}_{ij}.
	\end{align}
	Then, there is $ \varepsilon_M\leq \varepsilon_0 $ and $ C_*=C_*(K,M) $ such that: if $ \norm{A}\leq\varepsilon_M  $ then
	\begin{align*}
		\norm[M]{f_0}\leq C_* \implies \norm[M]{f_t}\leq C_*
	\end{align*}
	for all $ t\geq0 $.
\end{lem}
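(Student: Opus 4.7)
The plan is to proceed by induction on the integer order $M \geq 3$. The cases $M = 3$ and $M = 4$ are covered directly by Lemma~\ref{lem:UnifMomentBound} with $p = M$, and the second moments are handled by \eqref{eq:MomentsLater}. For the inductive step $M \geq 5$, I would assume uniform-in-$n$ and uniform-in-$t$ bounds $\norm[M']{F^n_t} \leq C_*(K, M')$ for all $2 \leq M' \leq M-1$ and close the bound at order $M$ using the method of moments from \cite{Desvillettes1993MethodMomentsHomogBoltzmannEq,MischlerWennberg1999HomogBE}.

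The key preparatory ingredient is a higher-order version of the Povzner estimate from Lemma~\ref{lem:Povzner}: for every integer $M \geq 3$ there are constants $\gamma_M > 0$ and $C_M > 0$, bounded uniformly in the cutoff index $n$ (by the same proportionality argument as in Lemma~\ref{lem:Povzner}), such that
\begin{align*}
	\int_{S^2}b_n(n \cdot \sigma)\left\{|v'|^M + |v'_*|^M - |v|^M - |v_*|^M\right\}d\sigma \leq -\gamma_M(|v|^M + |v_*|^M) + C_M \sum_{j=1}^{M-1}|v|^j|v_*|^{M-j}.
\end{align*}
Integrating against $F^n_t(dv)F^n_t(dv_*)$, the sum $\sum_{j=1}^{M-1}\norm[j]{F^n_t}\norm[M-j]{F^n_t}$ is bounded by some $D_M = D_M(K, M)$ uniformly in $n$ and $t$, thanks to the inductive hypothesis (together with the trivial estimate $\norm[1]{F^n_t} \leq \norm[2]{F^n_t}^{1/2}$ coming from Jensen). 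The coercive diagonal part contributes $-\tilde\gamma_M \norm[M]{F^n_t}$, with $\tilde\gamma_M := \gamma_M \inf_n c'(\Lambda_n) > 0$.

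Plugging $\varphi(v) = |v|^M$ into the weak formulation of \eqref{eq:IntrodgenBE} (with matrix $A + \bar\beta_n$) and bounding the drift term exactly as in Lemma~\ref{lem:UnifMomentBound} by $M(1 + c_0)\varepsilon_M\norm[M]{F^n_t}$ — using $|\bar\beta_n| \leq c_0 \varepsilon_M$, which follows from re-applying the eigenvalue-perturbation argument of Lemma~\ref{lem:SteadyStateMomenEq} in the smaller parameter range $\norm{A} \leq \varepsilon_M$ — yields the differential inequality
\begin{align*}
	\dfrac{d}{dt}\norm[M]{F^n_t} \leq \big[M(1+c_0)\varepsilon_M - \tilde\gamma_M\big]\norm[M]{F^n_t} + D_M.
\end{align*}
Choosing $\varepsilon_M \leq \varepsilon_0$ so small that $M(1+c_0)\varepsilon_M \leq \tilde\gamma_M/2$, Gronwall's lemma gives
\begin{align*}
	\norm[M]{F^n_t} \leq \norm[M]{F_0}e^{-\tilde\gamma_M t/2} + \dfrac{2D_M}{\tilde\gamma_M},
\end{align*}
and taking $C_* = C_*(K, M)$ larger than $2D_M/\tilde\gamma_M$ closes the induction exactly as at the end of the proof of Lemma~\ref{lem:UnifMomentBound}.

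The main obstacle is to justify the higher-order Povzner estimate above with constants uniform along the cutoff sequence. For $M \leq 4$ this is Lemma~\ref{lem:Povzner}, but for $M \geq 5$ the gain contribution no longer admits a clean bound by a product of second moments, and one has to perform the finer binomial expansion of $|v'|^M + |v'_*|^M$ of \cite{MischlerWennberg1999HomogBE}, isolating the coercive diagonal part $-\gamma_M(|v|^M + |v_*|^M)$ with a strictly positive angular coefficient, together with a bilinear remainder of strictly lower total order. The uniformity in $n$ is then handled by the same truncation argument as in \cite[Appendix]{Villani1998NewClassWeakSol}, with the required angular integrability guaranteed by \eqref{eq:AssumptCrossSection}.
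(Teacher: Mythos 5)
Your proposal is correct and follows essentially the same route as the paper's (very terse) proof: induction on $M$ with base cases $M=3,4$ supplied by Lemma~\ref{lem:UnifMomentBound}, a higher-order Povzner estimate with coercive diagonal part and lower-order bilinear remainder controlled by the inductive hypothesis, and a further shrinking of $\varepsilon_M\leq\varepsilon_{M-1}$ at each step to absorb the drift term before applying Gronwall. The paper only sketches this in two lines, so your write-up is simply a fleshed-out version of the intended argument.
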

\begin{proof} 
	This can be proved by induction over $ M $ by applying repeatedly the Povzner estimate, Lemma \ref{lem:Povzner}. The case $ M=3,4 $ is covered by Lemma \ref{lem:UnifMomentBound} and at each step one has to choose $ \varepsilon_M\leq \varepsilon_{M-1} $ and $ \norm{A}\leq \varepsilon_M $ to absorb the drift term.
\end{proof}
\begin{proof}[Proof of Theorem \ref{thm:MainTheorem}. (iii).]
	We argue as in the existence proof in Subsection \ref{subsec:Existence}. However, now we include the uniform bound $ \norm[M]{f}\leq C_*(M,K) $ in the definition of the sets $ \mathscr{U} $. The so constructed stationary solutions coincide with the ones in part (i) of Theorem \ref{thm:MainTheorem} by uniqueness.
\end{proof}

\section{Application to simple and planar shear}\label{sec:Application}
In this section, we will discuss the long-time behavior of homoenergetic solutions in the case of simple and planar shear. Recall that homoenergetic flows have the form $ g(t,x,v)= f(t,v-L(t)x) $ and $ f=f(t,v) $ satisfies
\begin{align}\label{eq:ApplicationHomoenFlow}
	\partial_t f -L(t)v\cdot \nabla f = Q(f,f)
\end{align}
with the matrix $ L(t)=(I+tL_0)^{-1}L_0 $. Under the assumption $ \det(I+tL_0)>0 $ for all $ t\geq0 $, one can study the form of $ L(t) $ as $ t\to \infty $ (see \cite[Sect. 3]{JamesVelazq2019SelfsimilarProfilesHomoenerg}). We consider the case of simple shear resp. planar shear ($ K\neq 0 $)
\begin{align*}
	L(t)=\left( \begin{array}{ccc}
		0 & K & 0 \\ 
		0 & 0 & 0 \\ 
		0 & 0 & 0
	\end{array} \right) \quad \text{resp.}\quad 
	L(t)=\dfrac{1}{t}\left( \begin{array}{ccc}
		0 & 0 & 0 \\ 
		0 & 0 & K \\ 
		0 & 0 & 1
	\end{array} \right) +\mathcal{O}\left( \dfrac{1}{t^2}\right) \quad (t\to \infty).
\end{align*}
In the first case, \eqref{eq:ApplicationHomoenFlow} preserves mass, since $ \trace L=0 $, and our study applies for $ K $ sufficiently small. Alternatively, one can assume a largeness condition on the kernel $ b $, see the assumption below.

Let us now turn to planar shear and write $ L(t)=A/(1+t)+\tilde{A}(t) $ with $ \trace A=1 $, $ \norm{\tilde{A}(t)}\leq\mathcal{O}\left( 1/(1+t)^2\right) $. First, let us introduce the time-change $ \log(1+t)=\tau $ and set $ f(t,v)=F(\tau,v)/(t+1) $ yielding the equation (after multiplying with $ (1+t)^2 $)
\begin{align}\label{eq:ApplicationModBE}
	\partial_\tau F - \div((A+B(\tau))v\cdot F) + \trace B(\tau) F = Q(F,F)
\end{align}
where $ B(\tau)=(1+t)\tilde{A}(t)=\mathcal{O}\left( 1/(1+t)\right)=\mathcal{O}(e^{-\tau}) $.

We will apply our results, Theorem \ref{thm:MainTheorem} $ (ii) $, to \eqref{eq:ApplicationModBE} yielding a self-similar asymptotic. Let us note that for a cutoff kernel the well-posedness theory is clearly stable enough to deal with the perturbation $  B(\tau)=\mathcal{O}( e^{-\tau}) $. We will show that this is also the case here for Maxwell molecules with a singular kernel. 

The well-posedness theory of \eqref{eq:ApplicationModBE} does not change at all comparing with \eqref{eq:GenHomoengBE} and so we omit further details about existence, uniqueness and regularity.  Below we will sketch the prove of convergence of $ F $ to a self-similar profile, which is analogous to Theorem \ref{thm:MainTheorem}. More precisely, we will show the following result (note that we write $ t $ instead of $ \tau $ in the theorem again). 

\begin{thm}\label{thm:Application}
	Consider \eqref{eq:ApplicationModBE} with $ A\in\R^{3\times 3} $ and $ B_t \in C([0,\infty);\R^{3\times 3}) $ such that $ \norm{B_t}=\mathcal{O}(e^{-t}) $. Let $ (F_t)_t\subset\PrM_{p} $, $ 2<p $, be a weak solution to \eqref{eq:ApplicationModBE} with $ F_0\in\PrM_{p} $ and first moments
	\begin{align*}
		\int_{\R^3}vF_0(dv)=U.
	\end{align*}
	We define $ m_t\in\R $, $ E_t\in\R^{3\times 3} $ as follows
	\begin{align}\label{eq:ApplicationMass}
		\begin{split}
			m_t&=\int F_t(v)dv=\exp\left( -\int_{0}^t\trace B_s \, ds \right), \quad \lim_{t\to \infty}m_t = m_\infty, 
			\\
			E_t'&=(A+B_t)E_t, \quad E_0=I.
		\end{split}
	\end{align}
	There is a constant $ \varepsilon_{0}=\varepsilon_0(m_\infty b,p)>0 $ such that for $ \norm{A}\leq \varepsilon_0 $, the following holds. Defining
	\begin{align}\label{eq:ApplicationThmTransformations}
		\tilde{F}_t:= \dfrac{e^{3\bar{\beta} t}}{m_t} F_t\left(e^{\bar{\beta}t} v +E_t U \right), \quad \tilde{f}_{st}(v)=f_{st}(v \alpha_\infty^{-1})\alpha_\infty^{-3}
	\end{align}
	for a constant $ \alpha_\infty= \alpha_\infty(F_0) $ we have the bound ($ \lambda>0 $)
	\begin{align*}
		d_2(\tilde{F}_t,\tilde{f}_{st}) \leq Ce^{-\lambda t}.
	\end{align*}
	Here, $ f_{st}\in\PrM_{p} $ is the solution to
	\begin{align*}
		\div ((A+\bar{\beta})v \cdot f_{st}) + m_\infty Q(f_{st},f_{st}) =0
	\end{align*} 
	where $ \bar{\beta}=\bar{\beta}(A) $ and $ f_{st} $ has second moments $ \bar{N}=\bar{N}(A) $ as in Theorem \ref{thm:MainTheorem} but with collision kernel $ m_\infty b $.
\end{thm}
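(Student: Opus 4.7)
The plan is to reduce \eqref{eq:ApplicationModBE} to the autonomous framework of Theorem~\ref{thm:MainTheorem}~(ii) with kernel $m_\infty b$, treating the departures $m_t-m_\infty$ and $B_t$ as exponentially decaying perturbations. Testing \eqref{eq:ApplicationModBE} against $\varphi\equiv 1$ confirms the formula for $m_t$ in \eqref{eq:ApplicationMass} and $\norm{B_t}=\mathcal{O}(e^{-t})$ gives $|m_t-m_\infty|=\mathcal{O}(e^{-t})$, while testing against $\varphi(v)=v$ yields the linear evolution of the mean, from which the transformation \eqref{eq:ApplicationThmTransformations} produces a probability measure $\tilde{F}_t$ with zero momentum. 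A direct calculation using the quadratic scaling of $Q$ shows that $\tilde{F}_t$ satisfies an equation whose autonomous limit is precisely \eqref{eq:IntrodgenBE} with matrix $A+\bar{\beta}$ and collision operator $m_\infty Q$, perturbed by explicit time-dependent terms of order $\mathcal{O}(e^{-t})$ coming from $m_t-m_\infty$ and $B_t$. The Povzner estimate of Lemma~\ref{lem:UnifMomentBound}, applied with the bounded multiplier $m_t\bar{b}$ in front of the collision term and the uniformly bounded drift $A+\bar{\beta}+B_t$, gives $\sup_{t\geq 0}\norm[p]{\tilde{F}_t}<\infty$ once $\varepsilon_0$ is small.

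The second-moment matrix $M_t$ of $\tilde{F}_t$ obeys a non-autonomous version of \eqref{eq:MomentEq2} with parameter $\alpha=m_t\bar{b}$ and drift $A+\bar{\beta}+B_t$. Viewing this as an $\mathcal{O}(e^{-t})$ perturbation of the autonomous linear system with generator $\mathcal{A}(m_\infty\bar{b},A)-2\bar{\beta}$, Lemma~\ref{lem:SteadyStateMomenEq} (applied with kernel $m_\infty b$) supplies a simple eigenvalue at $0$ with eigenvector $\bar{N}$ and a spectral gap $\nu>0$. The standard variation-of-constants argument for asymptotically autonomous linear systems then produces $\alpha=\alpha(F_0)\geq 0$ and $\mu>0$ with
\begin{align*}
\norm{M_t-\alpha^2\bar{N}}\leq Ce^{-\mu t}.
\end{align*}

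Writing $\psi_t:=\F[\tilde{F}_t]$ and $\Psi:=\F[\tilde{f}_{st}]$, the Taylor expansion of characteristic functions around $k=0$, together with the uniform regularity of $\psi_t$ that follows from the $p$-th moment bound, yields
\begin{align*}
|\psi_t(k)-\Psi(\alpha k)|\leq C|k|^p+Ce^{-\mu t}|k|^2,
\end{align*}
exactly as in Step~3 of the proof of Theorem~\ref{thm:MainTheorem}~(ii). To propagate this estimate forward I would establish a non-autonomous analogue of Proposition~\ref{pro:ComparisionNonCutoff}: in the cutoff Fourier formulation the linearization becomes the time-dependent $m_t\Lin_n$ and the drift carries the extra term $B_t^\top$, but since $m_t\to m_\infty$ and $B_t\to 0$ are integrable in time, the integrating factor $\exp\bigl(\int_0^t(m_s-m_\infty)S_n\,ds\bigr)$---uniformly bounded in $n$---absorbs the multiplier discrepancy, while the $B_t$-drift is treated as a source bounded by $\norm{B_t}$ times a quadratic polynomial, handled exactly like the $R_n$ remainder in the uniqueness proof of Proposition~\ref{pro:IntExistUniq}. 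This non-autonomous bookkeeping is the main technical obstacle. Once in place, the $\Lin$-Lipschitz estimate of Lemma~\ref{lem:PosCutoffEstimateP} and the eigenvalue formula of Lemma~\ref{lem:EigenfuncEigenvalLinOp} (with $\lambda_n$ replaced by its $m_\infty$-scaled limit) yield, for $\varepsilon_0$ small enough that $p(1+c_0)\varepsilon_0<\min(m_\infty\lambda(p)/2,\nu/4)$,
\begin{align*}
|\psi_{2T}(k)-\Psi(\alpha k)|\leq Ce^{-2\theta T}(|k|^p+|k|^2),
\end{align*}
and the interpolation \eqref{eq:Interpolation} delivers $d_2(\tilde{F}_t,\tilde{f}_{st})\leq Ce^{-\lambda t}$.
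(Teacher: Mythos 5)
Your overall architecture is sound and, up to one structural choice, mirrors the paper's: the same normalization removing mass and momentum, the same passage to self-similar variables, the same perturbed second-moment analysis giving $\norm{M_t-\alpha^2\bar{N}}\leq Ce^{-\mu t}$, uniform $p$-moment bounds via Povzner, and the same endgame (Taylor expansion, comparison principle on a window of length $T$, interpolation \eqref{eq:Interpolation}). Where you differ is in how the non-autonomous perturbation enters the Fourier argument. The paper does \emph{not} build a non-autonomous comparison principle: it introduces auxiliary solutions $g^{(T)}$ of the \emph{autonomous} equation (kernel $m_\infty b$, matrix $A+\bar{\beta}$, $B\equiv 0$) started from $f_T$, proves the flow-stability estimate $d_2(f_{t+T},g^{(T)}_t)\leq Ct\,e^{-T+2\norm{A+\bar{\beta}}t}$ by the same Gronwall scheme as the uniqueness proof (the terms $(m_{t+T}-m_\infty)\hat{Q}_n$ and $B_{t+T}$ being sources of size $e^{-T}|k|^2$, uniformly in $n$ by the cancellation estimate \eqref{eq:FourierCancellation}), applies the already-proven Theorem~\ref{thm:MainTheorem}~(ii) to $g^{(T)}$ with constants made uniform in $T$, and closes with a triangle inequality and the diagonal choice $t=T$. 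Your route re-derives the comparison machinery for the time-dependent equation instead of quoting the autonomous result; both work, but the paper's detour through $g^{(T)}$ lets it reuse Proposition~\ref{pro:ComparisionNonCutoff} verbatim.

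There is, however, a concrete error in the step you yourself identify as the main obstacle. The factor $\exp\bigl(S_n\int_0^t(m_s-m_\infty)\,ds\bigr)$ is \emph{not} uniformly bounded in $n$: the time integral converges to a finite but generically nonzero constant $c_\infty$, while $S_n\nearrow\infty$ because $b$ is singular, so the factor behaves like $e^{c_\infty S_n}$ and degenerates as $n\to\infty$. Any argument that needs this quantity to be bounded fails in the non-cutoff limit. The fix is to never compare the integrating factors: use the exact factor $\exp\bigl(S_n\int_0^t m_s\,ds\bigr)$ to absorb the time-dependent loss term $m_tS_n\varphi$, divide by the same factor at the end so that all $S_n$-dependence cancels identically, and note that in the supersolution $|k|^p\exp\bigl(-\int_0^t(m_s\lambda_n(p)-p\norm{A+\bar{\beta}+B_s})\,ds\bigr)$ only the uniformly bounded quantity $\lambda_n(p)\leq\lambda(p)$ multiplies $\int_0^t(m_s-m_\infty)\,ds$, which is harmless. (Equivalently, follow the paper and move the discrepancy $(m_t-m_\infty)\hat{Q}_n(\varphi,\varphi)$ entirely into the source, where it is bounded by $Ce^{-t}|k|^2$ uniformly in $n$ since $\Lambda_n\leq\Lambda$, rather than into the integrating factor.) With that correction, and provided your comparison is run on $[T,2T]$ as your final display indicates, so that the $\mathcal{O}(e^{-T})|k|^2$ sources contribute only $\mathcal{O}(Te^{-T}e^{2\norm{A+\bar{\beta}}T})$, the proof closes as you describe.
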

\begin{rem}
	Let us stress that $ \varepsilon_{0}=\varepsilon_{0}(m_\infty b) $ does not depend on $ F $, since $ m_\infty $ is given by \eqref{eq:ApplicationMass}. The constant will have similar dependencies as in Theorem \ref{thm:MainTheorem}. Furthermore, the functions \eqref{eq:ApplicationMass} are needed to deal with the time-dependent mass and momentum.
\end{rem}

With this let us now go back to solutions $ (f_t)_t $ of equation \eqref{eq:ApplicationHomoenFlow} with $ L(t)=A/(1+t)+\tilde{A}(t) $. To apply the previous result, we need $ \norm{A}\leq \varepsilon_0 $. This might not be true for $ A $ coming from the matrix $ L(t) $ above. However, one can instead assume a largeness condition on the kernel $ b $. To see this, let us rescale time $ \tau\mapsto\tau M $ yielding
\begin{align}\label{eq:ApplicationRescaling}
	\partial_\tau F - \dfrac{1}{M}\div((A+B(\tau))v\cdot F) + \dfrac{1}{M}\trace B(\tau) F = \dfrac{1}{M}Q(F,F).
\end{align}
In particular, the collision kernel is given by $ b/M $. We can hence consider the following assumption.

\paragraph{Assumption.} Assume that the kernel $ b $ is chosen such that
\begin{align*}
	\norm{A/M} \leq \varepsilon_0(m_\infty b/M)
\end{align*} 
is satisfied for some (maybe sufficiently large but fixed) $ M>0 $.

A similar condition was also used in \cite[Section 5.2]{JamesVelazq2019SelfsimilarProfilesHomoenerg}.  Under this assumption, we can apply Theorem \ref{thm:Application}. Let us state now the asymptotics in terms of $ (f_t)_t $ solving \eqref{eq:ApplicationHomoenFlow}. For this we undo the above transformations, i.e. the logarithmic time change $ f(t,v)=F(\tau,v)/(t+1) $, the renormalization in \eqref{eq:ApplicationThmTransformations} and the scaling of time $ \tau\mapsto\tau M $. We obtain for the solution $ f $ of \eqref{eq:ApplicationGenBE}
\begin{align}\label{eq:SelfSimilarConvergenceResult}
	\dfrac{e^{t/M}e^{3\bar{\beta} t}}{m_t} f \left(e^{t/M}-1, e^{\bar{\beta}t} v +E_t U \right)  \rightarrow f_{st}(v \alpha^{-1})\alpha^{-3} \qquad \text{as}\; t\to \infty.
\end{align}
Here, $ U\in\R^3 $ is the mean of the initial condition $ f_0\in \PrM_{p} $ and $ \alpha=\alpha(f_0) $ is as in Theorem \ref{thm:Application}. For $ B_\tau:=e^{\tau}\tilde{A}(e^{\tau}-1) $ we defined
\begin{align*}
	m_t=\exp\left( -\dfrac{1}{M}\int_{0}^t\trace B_s \, ds \right), \quad E_t'=\dfrac{1}{M}(A+B_t)E_t, \quad E_0=I.
\end{align*}

The convergence above appears with an order $ \mathcal{O}( e^{-\lambda \tau} ) $, $ \tau =e^{t/M}-1 $ in \eqref{eq:SelfSimilarConvergenceResult}, w.r.t the metric $ d_2 $. Thus, on the normal time scale the convergence has order $ \mathcal{O}( t^{-\lambda M} ) $. Although this gets better as $ M $ increases, our assumption on the collision kernel $ b $ would become more restrictive.

Finally, let us give the main arguments for the proof of Theorem \ref{thm:Application}, following our previous analysis in Section \ref{sec:ExUniStabSelfSimilar}.

\begin{proof}[Proof of Theorem \ref{thm:Application}]
	\textit{Preparation.}  First, note that equation \eqref{eq:ApplicationModBE} does not preserve mass. So we need to rescal the solution. Furthermore, we can set the momentum to zero. More precisely, with \eqref{eq:ApplicationMass} we define $ G_t(v)=F_t(v+E_t U)/m_t $ and obtain
	\begin{align*}
		\partial_t G -\div((A+B_t)v\cdot G) = m_tQ(G,G), \quad \int G_t(dv)=1, \quad \int vG_t(dv)=0.
	\end{align*}
	The assumption $ \norm{B_t}=\mathcal{O}(e^{-t}) $ implies $ m_t\to m_\infty>0 $ and $ |m_{T+t}-m_T|\leq Ce^{-T} $. We introduce the self-similar variables $ G_t(v)=f_t(ve^{-\bar{\beta}t})e^{-3\bar{\beta}t} $ and get
	\begin{align}\label{eq:ApplicationGenBE}
		\partial_t f -\div((A+\bar{\beta}+B_t)v\cdot f) = m_tQ(f,f).
	\end{align}
	where $ \bar{\beta}=\bar{\beta}(A,m_\infty\bar{b}) $ is as in Theorem \ref{thm:MainTheorem} or Lemma \ref{lem:SteadyStateMomenEq} when considering the collision kernel $ m_\infty\bar{b} $.
	
	Now, the plan is as follows. First, we study the longtime behavior of the second moments $ M_t $ of $ f_t $ in Step 1. Then, in Step 2, we want to compare \eqref{eq:ApplicationGenBE} to solutions $ g^{(T)} $ of
	\begin{align}\label{eq:ApplicationTimeGenBE}
		\partial_t g^{(T)} =\div\left((A+\bar{\beta})v \cdot g^{(T)} \right)+m_\infty Q\left( g^{(T)},g^{(T)}\right) , \quad g^{(T)}_0=f_T.
	\end{align}
	This equation has $ f_{st} $ as stationary solution. In Step 3, we apply Theorem \ref{thm:MainTheorem} to $ g^{(T)} $ to obtain $ g^{(T)}\to f_{st}(\alpha_T^{-1}\cdot)\alpha_T^{-3} $. Finally, we conclude with all this $ f_t\to f_{st}(\alpha_\infty^{-1}\cdot)\alpha_\infty^{-3} $. Here, $ \alpha_T,\, \alpha_\infty $ are constants, which precise values will be apparent below.
	
	\textit{Step 1.} Let $ M_t $ be the second moments of $ f_t $, which satisfy (see also Lemma~\ref{lem:MomentEq})
	\begin{align*}
		\dfrac{dM_t}{dt} = (2\bar{\beta}+\mathcal{A}(m_t \bar{b},A)) M_t + \mathcal{B}_t M_t, 
	\end{align*}
	where $ \mathcal{A}(m_t \bar{b},A+\bar{\beta}) $, $ \mathcal{B}_t  $ are linear operators $ \R^{3\times 3}_{sym}\rightarrow \R^{3\times 3}_{sym} $ and $ \norm{\mathcal{B}_t}\leq Ce^{-t} $. The first operator corresponds to the drift term with matrix $ A+\bar{\beta} $ and the collision operator. The second operator captures the drift term with $ B_t $. Due to the simple dependence of $ \mathcal{A} $ w.r.t. $ m_\infty\bar{b} $ (see Lemma \ref{lem:MomentEq}) we can write
	\begin{align*}
		\dfrac{dM_t}{dt} = \mathcal{A}(m_\infty \bar{b},A+\bar{\beta}) M_t + \mathcal{R}_t M_t,
	\end{align*}
	where we put $ m_\infty $ in the first term. Since $ |m_t-m_\infty|\leq Ce^{-t} $ we still have $ \norm{\mathcal{R}_t}\leq Ce^{-t} $. The results of Lemma \ref{lem:SteadyStateMomenEq} hold for the semigroup $ e^{\mathcal{A} t} $ generated by $\mathcal{A}:= \mathcal{A}(m_\infty \bar{b},A+\bar{\beta}) $. Using Duhamel's formula one can prove that
	\begin{align*}
		e^{\mathcal{A} t}M_T \to \alpha_T^2\bar{N}, \quad M_t\to \alpha_\infty^2 \bar{N}
	\end{align*}
	as $ t\to \infty $ for all $ T\geq0 $ with a convergence of order $ Ce^{-\nu t} $. Furthermore, $ |\alpha_\infty^2-\alpha_T^2|\leq Ce^{-T} $ where the constants $ C>0 $ are always independent of $ T $.
	
	\textit{Step 2.} Now, we compare $ f $ with $ g^{(T)} $ satisfying \eqref{eq:ApplicationTimeGenBE}. We need for all $ t, \,T\geq0 $ the following estimate
	\begin{align}\label{eq:ApplicationStability}
		d_2\left( f_{t+T},g^{(T)}_t \right) \leq Ct \,e^{-T+2\norm{A+\bar{\beta}}t}
	\end{align} 
	with $ C $ independent of $ t,T $. This works in the same way as the uniqueness proof of Proposition \ref{pro:IntExistUniqReg} in Subsection \ref{subsec:Uniquenss}. The difference here is the coefficient $ m_t $ in front of the collision operator, as well as the term due to $ B_t $ in \eqref{eq:ApplicationGenBE}. Both of them lead to a term of order $ e^{-T} $ and after the Gronwall argument one obtains the inequality
	\begin{align*}
		e^{m_\infty S_nt}d_2(\varphi_{t+T},\psi_t)\leq 
		\left( r_n+ Ce^{-T}\right) \int_0^te^{m_\infty S_nr} e^{\left[ 2\norm{A+\bar{\beta}}+m_\infty S_n\right](t-r)}dr.
	\end{align*}
	After canceling the exponential terms containing $ m_\infty S_n $, we let $ n\to \infty $ and estimate the integral on the right-hand side to get \eqref{eq:ApplicationStability}.
	
	\textit{Step 3.} Now, we apply Theorem \ref{thm:MainTheorem} to the solutions $ g^{(T)} $ to \eqref{eq:ApplicationTimeGenBE}. For this, let $ f_{st} $ be the stationary solution to \eqref{eq:ApplicationTimeGenBE} with second moments $ \bar{N} $ and $ \Psi=\F[f_{st}] $. We get in Fourier space $ \psi_t^{(T)}=\F[g^{(T)}] $, with $ \alpha_T $ as in Step 1,
	\begin{align*}
		d_2\left( \psi^{(T)}_t, \Psi(\alpha_T\,\cdot)\right) \leq Ce^{-\theta t}.
	\end{align*}
	The only problem now is that the constant $ C $ might depend on the initial condition $ f_T $ and thus on $ T $. If we trace back the dependence of this constant in the proof of Theorem \ref{thm:MainTheorem} (ii), then two constants $ C_1,C_2 $ contribute. The first one satisfies
	\begin{align*}
		\norm{e^{\mathcal{A}_{\bar{\beta}} t}M_T-\alpha_T^2 \bar{N}}\leq C_1e^{-\nu t}
	\end{align*}
	and depends only on $ M_T $, which is uniformly bounded. The second constant is a uniform bound on the moments of order $ 4\geq p>2 $, see Step 2 in the proof of Theorem \ref{thm:MainTheorem} (ii). Looking at the arguments there, we see that it suffices to show $ \sup_{t}\norm[p]{f_t} <\infty $ in order to obtain $ \sup_{t,T}\norm[p]{g_t^{(T)}}<\infty $. This can be proved again by an application of the Povzner estimate to the equation \eqref{eq:ApplicationGenBE}. The difference here is an additional term due to $ B_t $. Since this is integrable in time one can choose $ \varepsilon_{0}>0 $ small enough in exactly the same way.
	
	\textit{Conclusion.} Let us combine all our estimates in Fourier space $ \varphi_t=\F[f_t] $, $ \psi_t^{(T)}=\F[g^{(T)}] $
	\begin{align*}
		d_2\left( \varphi_{t+T},\Psi(\alpha_\infty \, \cdot) \right)&\leq d_2\left(\varphi_{t+T},\psi^{(T)}_t \right) + d_2\left(\psi^{(T)}_t ,\Psi(\alpha_T \,\cdot) \right) + d_2\left( \Psi(\alpha_T \,\cdot), \Psi(\alpha_\infty \,\cdot) \right)
		\\
		&\leq Ct\, e^{-T+2\norm{A+\bar{\beta}}t}+Ce^{-\theta t}+ Ce^{-T}.
	\end{align*}
	The first two estimates are clear by Step 2 respectively Step 3. The last one follows from a Taylor expansion and $ |\alpha_\infty^2-\alpha_T^2|\leq Ce^{-T} $, as we know from Step~1. Let us now choose $ t=T $ and ensure $ 2\norm{A+\bar{\beta}}\leq 2(1+c_0)\norm{A}\leq 1/2 $, which is just a smallness assumption on $ \norm{A} $ that we used in a similar form for Theorem \ref{thm:MainTheorem}. This concludes the proof.
\end{proof}

\bibliographystyle{abbrv}
\bibliography{Selfsimilar_solutions_homoenergBE_noncutoff}

\begin{thebibliography}{10}

\bibitem{AlexandreDesvillettesVillaniWennberg2000EntropyDissip}
R.~Alexandre, L.~Desvillettes, C.~Villani, and B.~Wennberg.
\newblock Entropy dissipation and long-range interactions.
\newblock {\em Archive for Rational Mechanics and Analysis}, 152(4):327--355,
  2000.

\bibitem{Arkeryd72OnBoltzmannEq}
L.~Arkeryd.
\newblock On the {B}oltzmann equation.
\newblock {\em Archive for Rational Mechanics and Analysis}, 45(1):1--16, 1972.

\bibitem{Boby1976FourierBoltzmannEq}
A.~V. {Bobylev}.
\newblock Fourier transform method in the theory of the {B}oltzmann equation
  for {M}axwellian molecules.
\newblock {\em Dokl. Akad. Nauk. SSSR}, 225:1041--1044, 1975.

\bibitem{Boby1988TheoryNonlinearBoltzmann}
A.~V. {Bobylev}.
\newblock The theory of the nonlinear spatially uniform {B}oltzmann equation
  for {M}axwell molecules.
\newblock {\em Sov. Scient. Rev. C}, 7:111--233, 01 1988.

\bibitem{BobylevCercignani2002ExactEternalSol}
A.~V. Bobylev and C.~Cercignani.
\newblock Exact eternal solutions of the {B}oltzmann equation.
\newblock {\em Journal of Statistical Physics}, 106(5):1019--1038, 2002.

\bibitem{BobylevCercignani2002SelfsimilarSol}
A.~V. Bobylev and C.~Cercignani.
\newblock Self-similar solutions of the {B}oltzmann equation and their
  applications.
\newblock {\em Journal of Statistical Physics}, 106(5):1039--1071, 2002.

\bibitem{BobylevCercignani2002SelfsimilarSolBENonMaxwell}
A.~V. Bobylev and C.~Cercignani.
\newblock Self-similar solutions of the {B}oltzmann equation for non-{M}axwell
  molecules.
\newblock {\em Journal of Statistical Physics}, 108(3):713--717, 2002.

\bibitem{BobylevCercignani2002SelfSimilarAsymptotics}
A.~V. Bobylev and C.~Cercignani.
\newblock Self-similar asymptotics for the {B}oltzmann equation with inelastic
  and elastic interactions.
\newblock {\em Journal of Statistical Physics}, 110(1/2):333--375, 2003.

\bibitem{BobylevCercignani2009SelfsimilarAsympt}
A.~V. Bobylev, C.~Cercignani, and I.~M. Gamba.
\newblock On the self-similar asymptotics for generalized nonlinear kinetic
  {M}axwell models.
\newblock {\em Communications in Mathematical Physics}, 291(3):599--644, 2009.

\bibitem{BobylevCercignaniToscani2003ProofAsymptPropSelfsimilarSol}
A.~V. Bobylev, C.~Cercignani, and G.~Toscani.
\newblock Proof of an asymptotic property of self-similar solutions of the
  {B}oltzmann equation for granular materials.
\newblock {\em Journal of Statistical Physics}, 111(1):403--417, 2003.

\bibitem{BobylevVelazq2020SelfsimilarAsymp}
A.~V. Bobylev, A.~Nota, and J.~J.~L. Vel{\'a}zquez.
\newblock Self-similar asymptotics for a modified {M}axwell--{B}oltzmann
  equation in systems subject to deformations.
\newblock {\em Communications in Mathematical Physics}, 380(1):409--448, 2020.

\bibitem{CannoneKarch2010InfintieEnergySol}
M.~Cannone and G.~Karch.
\newblock Infinite energy solutions to the homogeneous {B}oltzmann equation.
\newblock {\em Communications on Pure and Applied Mathematics}, 63(6):747--778,
  2010.

\bibitem{CannoneKarch2013SelfsimilarSolHomogBE}
M.~Cannone and G.~Karch.
\newblock On self-similar solutions to the homogeneous {B}oltzmann equation.
\newblock {\em Kinetic and Related Models}, 6(4):801--808, 2013.

\bibitem{Cercignani1988BoltzmannEqApplication}
C.~Cercignani.
\newblock {\em The {B}oltzmann Equation and Its Applications}.
\newblock Springer New York, 1988.

\bibitem{Cercignani1989ExistenceHomoenergetic}
C.~Cercignani.
\newblock Existence of homoenergetic affine flows for the {B}oltzmann equation.
\newblock {\em Archive for Rational Mechanics and Analysis}, 105(4):377--387,
  1989.

\bibitem{Cercignani2001ShearFlowGranularMaterial}
C.~Cercignani.
\newblock Shear flow of a granular material.
\newblock {\em Journal of Statistical Physics}, 102(5):1407--1415, 2001.

\bibitem{CercignaniBoltzmannApproachShearFlow}
C.~Cercignani.
\newblock The {B}oltzmann equation approach to the shear flow of a granular
  material.
\newblock {\em R. Soc. Lond. Philos. Trans. Ser. A Math. Phys. Eng. Sci.},
  360(1792):407--414, 2002.
\newblock Discrete modelling and simulation of fluid dynamics (Corse, 2001).

\bibitem{DayalJamesNonequilibrium}
K.~Dayal and R.~D. James.
\newblock Nonequilibrium molecular dynamics for bulk materials and
  nanostructures.
\newblock {\em Journal of the Mechanics and Physics of Solids}, 58(2):145--163,
  2010.

\bibitem{DayalJames2012DesignViscometers}
K.~Dayal and R.~D. James.
\newblock Design of viscometers corresponding to a universal molecular
  simulation method.
\newblock {\em Journal of Fluid Mechanics}, 691:461--486, 2012.

\bibitem{Liu2020BoltzmannUniformShearFlow}
R.~Duan and S.~Liu.
\newblock The {B}oltzmann equation for uniform shear flow.
\newblock {\em Archive for Rational Mechanics and Analysis}, 242(3):1947--2002,
  2021.

\bibitem{GabettaToscaniWennberg1995MetricsProbDistTrendEquili}
G.~Gabetta, G.~Toscani, and B.~Wennberg.
\newblock Metrics for probability distributions and the trend to equilibrium
  for solutions of the {B}oltzmann equation.
\newblock {\em Journal of Statistical Physics}, 81(5):901--934, 1995.

\bibitem{Galkin1958ClassOfSolution}
V.~Galkin.
\newblock On a class of solutions of {G}rad's moment equations.
\newblock {\em Journal of Applied Mathematics and Mechanics}, 22(3):532 -- 536,
  1958.

\bibitem{Galkin1964OneDimUnsteadySol}
V.~Galkin.
\newblock One-dimensional unsteady solution of the equation for the kinetic
  moments of a monatomic gas.
\newblock {\em Journal of Applied Mathematics and Mechanics}, 28(1):226 -- 229,
  1964.

\bibitem{Galkin1966ExactSol}
V.~S. Galkin.
\newblock Exact solutions of the kinetic-moment equations of a mixture of
  monatomic gases.
\newblock {\em Fluid Dynamics}, 1(5):29--34, 1966.

\bibitem{GarzoSantos2003KineticTheory}
V.~Garz{\'{o}} and A.~Santos.
\newblock {\em Kinetic Theory of Gases in Shear Flows}.
\newblock Springer Netherlands, 2003.

\bibitem{VelazqJames2019LongAsympHomoen}
R.~D. James, A.~Nota, and J.~J.~L. Vel{\'a}zquez.
\newblock Long-time asymptotics for homoenergetic solutions of the {B}oltzmann
  equation: Collision-dominated case.
\newblock {\em Journal of Nonlinear Science}, 29(5):1943--1973, 2019.

\bibitem{JamesVelazq2019SelfsimilarProfilesHomoenerg}
R.~D. James, A.~Nota, and J.~J.~L. Vel{\'a}zquez.
\newblock Self-similar profiles for homoenergetic solutions of the {B}oltzmann
  equation: Particle velocity distribution and entropy.
\newblock {\em Archive for Rational Mechanics and Analysis}, 231(2):787--843,
  2019.

\bibitem{JamesVelazq2019LongtimeAsymptoticsHypDom}
R.~D. James, A.~Nota, and J.~J.~L. Vel{\'{a}}zquez.
\newblock Long time asymptotics for homoenergetic solutions of the boltzmann
  equation. hyperbolic-dominated case.
\newblock {\em Nonlinearity}, 33(8):3781--3815, jun 2020.

\bibitem{MouhotLu12MeasureSolBoltzmannEqPart1}
X.~Lu and C.~Mouhot.
\newblock On measure solutions of the {B}oltzmann equation, part {I}: {M}oment
  production and stability estimates.
\newblock {\em Journal of Differential Equations}, 252(4):3305--3363, 2012.

\bibitem{LuWennberg2002SolutionIncEnergy}
X.~Lu and B.~Wennberg.
\newblock Solutions with increasing energy for the spatially homogeneous
  {B}oltzmann equation.
\newblock {\em Nonlinear Analysis: Real World Applications}, 3(2):243--258,
  2002.

\bibitem{Matthies2019}
K.~Matthies and F.~Theil.
\newblock Rescaled objective solutions of {F}okker--{P}lanck and {B}oltzmann
  equations.
\newblock {\em SIAM Journal on Mathematical Analysis}, 51(2):1321--1348, 2019.

\bibitem{MischlerWennberg1999HomogBE}
S.~Mischler and B.~Wennberg.
\newblock On the spatially homogeneous {B}oltzmann equation.
\newblock {\em Annales de l'I.H.P. Analyse non lin\'eaire}, 16(4):467--501,
  1999.

\bibitem{Morimoto2012RemarkCanoneKarchSol}
Y.~Morimoto.
\newblock A remark on {C}annone-{K}arch solutions to the homogeneous
  {B}oltzmann equation for {M}axwellian molecules.
\newblock {\em Kinetic \& {R}elated {M}odels}, 5(3):551--561, 2012.

\bibitem{MorimotoWang2016MeasureValuedSol}
Y.~Morimoto, S.~Wang, and T.~Yang.
\newblock Measure valued solutions to the spatially homogeneous {B}oltzmann
  equation without angular cutoff.
\newblock {\em Journal of Statistical Physics}, 165(5):866--906, 2016.

\bibitem{MorimotoYang2015SmoothingHomogBoltzmannEqMeasure}
Y.~Morimoto and T.~Yang.
\newblock Smoothing effect of the homogeneous {B}oltzmann equation with measure
  valued initial datum.
\newblock {\em Annales de l'I.H.P. Analyse non lin\'eaire}, 32(2):429--442,
  2015.

\bibitem{PulvirentiToscani1996TheoryBotlzEqMaxwellMolecules}
A.~Pulvirenti and G.~Toscani.
\newblock The theory of the nonlinear {B}oltzmann equation for {M}axwell
  molecules in {F}ourier representation.
\newblock {\em Annali di Matematica Pura ed Applicata}, 171(1):181--204, 1996.

\bibitem{VillaniToscani1999}
G.~Toscani and C.~Villani.
\newblock Probability metrics and uniqueness of the solution to the {B}oltzmann
  equation for a {M}axwell gas.
\newblock {\em Journal of Statistical Physics}, 94(3):619--637, 1999.

\bibitem{Truesdell1956PressureFluxII}
C.~Truesdell.
\newblock {\em On the Pressures and the Flux of Energy in a Gas according to
  {M}axwell's Kinetic Theory, II}, volume~5.
\newblock Indiana University Mathematics Department, 2020/11/09/ 1956.

\bibitem{TruesdellMuncasterFundamentalsMaxwell}
C.~Truesdell and R.~G. Muncaster.
\newblock {\em Fundamentals of {M}axwell's kinetic theory of a simple monatomic
  gas}, volume~83 of {\em Pure and Applied Mathematics}.
\newblock Academic Press, 1980.

\bibitem{Villani1998NewClassWeakSol}
C.~Villani.
\newblock On a new class of weak solutions to the spatially homogeneous
  {B}oltzmann and {L}andau equations.
\newblock {\em Archive for Rational Mechanics and Analysis}, 143(3):273--307,
  1998.

\bibitem{Villani2002Review}
C.~Villani.
\newblock {\em A Review of Mathematical Topics in Collisional Kinetic Theory},
  volume~1 of {\em Handbook of Mathematical Fluid Dynamics}.
\newblock North-Holland, 2002.

\bibitem{Wennberg1999ExampleNonuniq}
B.~Wennberg.
\newblock An example of nonuniqueness for solutions to the homogeneous
  {B}oltzmann equation.
\newblock {\em Journal of Statistical Physics}, 95(1/2):469--477, 1999.

\end{thebibliography}

\end{document}